\newcommand\undermat[2]{%
  \makebox[0pt][l]{$\smash{\underbrace{\phantom{%
    \begin{matrix}#2\end{matrix}}}_{\text{$#1$}}}$}#2}
\newtcolorbox{tbox}[1][]{%
    breakable,
    enhanced,
    colframe=blue,
    coltitle=white,
    #1
}
\newtheorem{introthm}{Theorem}
\newtheorem{introcor}[introthm]{Corollary}
\newtheorem{theorem}{Theorem}[section]
\newtheorem{lemma}[theorem]{Lemma}
\newtheorem{proposition}[theorem]{Proposition}
\newtheorem{corollary}[theorem]{Corollary}
\theoremstyle{definition}
\newtheorem{example}[theorem]{Example}
\newtheorem{construction}[theorem]{Construction}
\newtheorem{remark}[theorem]{Remark}
\theoremstyle{remark}
\title{The Cox ring of an embedded variety}
\author[C.~Herrera]{Crist\'obal Herrera}
\address{
Departamento de Matem\'atica,
Universidad de Concepci\'on,
Casilla 160-C,
Concepci\'on, Chile}
\email{crherrera2019@udec.cl}
\author[A.~Laface]{Antonio Laface}
\address{
Departamento de Matem\'atica,
Universidad de Concepci\'on,
Casilla 160-C,
Concepci\'on, Chile}
\email{alaface@udec.cl}
\author[L.~Ugaglia]{Luca Ugaglia}
\address{
Dipartimento di Matematica e Informatica,
Universit\`a degli studi di Palermo,
Via Archirafi 34,
90123 Palermo, Italy}
\email{luca.ugaglia@unipa.it}
\subjclass[2010]{Primary 14M25; Secondary 14C20}
\keywords{Toric varieties, Mori dream spaces, Cox rings}
\thanks{The authors were partially supported by Proyecto FONDECYT Regular n.~1230287. The last two authors were also partially supported by the ``Piano strategico per il miglioramento della qualit\`a della ricerca e dei risultati della VQR 2020-2024 - Misura A'' of the University of Palermo. The third author is a member of INdAM - GNSAGA}
\date{\today}
\begin{document}

\begin{abstract}
We compute the Cox ring of an embedded variety $X \subseteq Z$ within a Mori dream space, under the assumption that the pullback map induces an isomorphism at the level of divisor class groups. We show that the Cox ring of $X$ is the intersection of finitely many localizations of a quotient image of the Cox ring of $Z$. As a consequence, we provide an algorithm that terminates if and only if the Cox ring of $X$ is finitely generated, thereby generalizing previous works on the subject~\cites{al,hau,ott,abban2}. We apply these results to compute the Cox ring of hypersurfaces in smooth projective toric varieties.
\end{abstract}

\maketitle

\section*{Introduction}
In what follows we work over an algebraically closed field $\mathbb K$ of characteristic $0$.

The notion of the Cox sheaf and Cox ring, as treated in this paper, was first introduced by Hausen in~\cite{hau}, where he addressed for the first time the problem of computing the Cox ring of an embedded variety, relating it to the Cox ring of its ambient space.
In this work, we develop an algorithmic approach to this problem.
Our goal is to compute the Cox ring of a normal variety $X$ embedded in a normal variety $Z$ with finitely generated Cox ring $\mathcal R(Z)$.
Our strategy relies on a key assumption: the pullback map on divisor class groups
$\iota^\ast\colon {\rm Cl}(Z) \longrightarrow {\rm Cl}(X)$
is an isomorphism.
Recall that $Z$ comes with a natural quotient construction
\cite{adhl}*{Con. 1.6.3.1}:
\[
\begin{tikzcd}
\widehat Z \arrow[r, hook] \arrow[d, "p_Z"'] & \overline Z \\
Z &
\end{tikzcd}
\]
Here $\widehat Z$ is the relative spectrum of the Cox sheaf of $Z$, while
$\overline Z = \operatorname{Spec}(\mathcal R(Z))$ is the spectrum of the Cox ring.
The \emph{irrelevant ideal} $\mathscr I_{\mathrm{irr}}(Z) \subseteq \mathcal R(Z)$ is the ideal defining the closed subset $\overline Z \setminus \widehat Z$, which is called the \emph{irrelevant locus} of $Z$.
The morphism $p_Z$ is a good quotient for the action of the quasi-torus
$\operatorname{Spec}(\mathbb K[\operatorname{Cl}(Z)])$.
We denote by $Z_0 \subseteq Z$ the smooth locus of $Z$ and by $X_0 := X \cap Z_0$ its intersection with $X$.
We let $\widehat X$ be the Zariski closure of $p_Z^{-1}(X_0)$ in $\widehat Z$, and
$\tilde X$ its Zariski closure in $\overline Z$.
These spaces fit into the chain of inclusions
\[
  p_Z^{-1}(X_0)
  \subseteq
  \widehat X
  \subseteq
  \tilde X_1
  \subseteq
  \tilde X,
\]
where $\tilde X_1$ is the open subset of $\tilde X$ obtained by removing the codimension-one irreducible components of $\tilde X \setminus \widehat X$ (see Construction~\ref{con:emb} for details).
Throughout the paper, we say that an open subset $U \subseteq Y$ is \emph{big} in $Y$ if its complement $Y \setminus U$ has codimension at least $2$.

\begin{introthm}\label{mteo}
Let $Z$ be a normal variety with finitely generated Cox ring $\mathcal R(Z)$, and let $X\subseteq Z$ be a normal subvariety with $\Gamma(X,\mathcal O_X^*)=\mathbb K^*$.
Assume that:
\begin{enumerate}
\item $X_0$ is big in $X$;
\item $p_Z^{-1}(X_0)$ is big in $\widehat X$;
\item $\tilde X_1$ is normal;
\item the pullback $\iota^\ast:{\rm Cl}(Z)\xrightarrow{\sim}{\rm Cl}(X)$ is an isomorphism.
\end{enumerate}
Set $R:=\mathcal R(Z)/I(\tilde X)$.
Let $m_1,\dots,m_s\in R$ be homogeneous elements which generate the ideal
$I(\tilde X\setminus\tilde X_1)$.
Then
\[
\mathcal R(X)= R_{m_1}\cap\cdots\cap R_{m_s},
\]
where the intersection is taken inside the common fraction field.
\end{introthm}

Observe that when $Z$ is smooth and $\tilde X$ is normal, the hypotheses $(1)$, $(2)$ and $(3)$ are automatically satisfied.
Theorem~\ref{mteo} generalizes results in~\cite{hau}*{\S 2} (see also~\cite{adhl}*{Cor. 4.1.1.3}) and, in the case of hypersurfaces, the main result of~\cite{al}. Indeed, in these cases, one has $\tilde X_1 = \tilde X$. As a consequence of this description, we produce Algorithm~\ref{alg}, which computes the Cox ring of $X$ iteratively by adding new generators at each step, and which terminates if and only if the Cox ring is finitely generated (see Proposition~\ref{prop:termination}). Our first application is the following.

\begin{introthm}\label{main-teo}
Let $(\mathbb{P},H)$ be a polarized pair, where $\mathbb{P}$ is a normal projective toric variety of dimension $n \geq 3$ and $H$ is an ample Cartier divisor.
Consider the Cox ring
$\mathcal{R}(\mathbb{P}) = \mathbb{K}[T_1, \dots, T_r]$,
and let $X \in |H|$ be a hypersurface defined by a homogeneous polynomial $f \in \mathcal{R}(\mathbb{P})$.
If $n \geq 4$, we assume that $X$ is a general element of the linear system $|H|$.
If $n = 3$, we assume instead that $X$ is a very general element of $|H|$ and that the divisor $H+K_{\mathbb P}$ is base point free.

Assume that the irrelevant locus of $\mathbb P$ has a unique codimension--$2$ component, say $V(T_1,T_2)$.
Let $d>0$ be the multiplicity of $f$ along this component, so that $f$ admits a decomposition
\[
f = \sum_{i=0}^d (-1)^i f_i\, T_1^i\, T_2^{d-i},
\qquad
\text{with } f_0,\dots,f_d \in \mathbb{K}[T_1, \dots, T_r] \text{ homogeneous}.
\]
Suppose further that the algebraic subset
$V(T_1, T_2, f_0, \dots, f_d) \subseteq \mathbb{K}^r$
has codimension $d+3$. Then the Cox ring of $X$ is isomorphic to
\[
\mathcal{R}(X) \simeq
\frac{\mathbb{K}[T_1, \dots, T_r, S_1, \dots, S_d]}{
\left\langle
\begin{array}{l}
f_0 + T_2 S_1, \\[3pt]
f_i + T_1 S_i + T_2 S_{i+1}, \quad 1 \leq i \leq d-1, \\[3pt]
f_d + T_1 S_d
\end{array}
\right\rangle},
\]
where $S_1, \dots, S_d$ are new homogeneous variables of appropriate multidegrees.
\end{introthm}

The above theorem generalizes results of Ottem \cite{ott}, who studied hypersurfaces in products of projective spaces, and recent findings by Denisi \cite{denisi} on hypersurfaces in products of weighted projective spaces. For this result
our technique resembles the unprojection methods developed by Miles Reid \cite{reid} and aligns with the approach taken by Abban in \cite{abban2}*{Sec. 3.3}.

In our next result we apply Theorem~\ref{main-teo} to the special case where $X$ is a general ample hypersurface in a smooth projective toric variety of Picard rank $2$.

\begin{introcor}
\label{main-cor}
Let $\mathbb{P}$ be a smooth projective
toric variety of dimension $n\geq 4$
and Picard rank $2$, and let $X$
be a smooth general ample hypersurface
of $\mathbb P$.
Then the Cox ring of $X$ is as given in Theorem~\ref{main-teo} in the following cases:
\begin{enumerate}
\item
The grading matrix and the irrelevant ideal of $\mathbb{P}$ are
\[
 \begin{bmatrix}
  1 & 1 & 0 & -a_1 & \cdots & -a_{n-1}\\
  0 & 0 & 1 & 1 & \cdots & 1
 \end{bmatrix},
 \qquad
 \langle T_1,T_2\rangle\cap \langle T_3,\dots,T_{n+2}\rangle,
\]
where $0\leq a_1\leq\cdots\leq a_{n-1}$
and $X$ has degree $[a,b]$, with $0< a\leq \max\{k\, :\, a_k=0\}$ and $b > 0$.
In this case $d = a$.
\item
The grading matrix and the irrelevant ideal of $\mathbb{P}$ are
\[
 \begin{bmatrix}
  1 & \cdots & 1 & 0 & -a_1\\
  0 & \cdots & 0 & 1 & 1
 \end{bmatrix},
 \qquad
 \langle T_1,\dots,T_n\rangle\cap \langle T_{n+1},T_{n+2}\rangle,
\]
where $a_1\geq 0$
and $X$ has degree $[a,b]$, with $a>0$ and $0<b\leq n-1$. In this case $d = b$.
\end{enumerate}
\end{introcor}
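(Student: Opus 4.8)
The plan is to verify, in each of the two cases, that $\mathbb{P}$ and a general hypersurface $X$ meet all the hypotheses of Theorem~\ref{main-teo}; the explicit presentation of $\mathcal{R}(X)$ then follows immediately. Since $\mathbb{P}$ is smooth projective it is normal, and in both cases there are $r=n+2$ Cox variables and Picard rank $2$, so $\dim\mathbb{P}=n\geq 4$. Reading the irrelevant ideal off the grading, in case~(1) its minimal primes $\langle T_1,T_2\rangle$ and $\langle T_3,\dots,T_{n+2}\rangle$ have codimensions $2$ and $n\geq 4$, while in case~(2) the primes $\langle T_1,\dots,T_n\rangle$ and $\langle T_{n+1},T_{n+2}\rangle$ have codimensions $n\geq 4$ and $2$. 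Hence each case has a unique codimension-$2$ component, namely $\langle T_1,T_2\rangle$ in case~(1) and $\langle T_{n+1},T_{n+2}\rangle$ in case~(2); in the latter these two variables play the role of $T_1,T_2$ in Theorem~\ref{main-teo}.

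Next I would compute the multiplicity $d$ of $f$ along this component from the two gradings. In case~(2) all of $T_1,\dots,T_n$ have vanishing second degree, so the second coordinate of $\deg f=[a,b]$ forces every monomial of $f$ to have total degree exactly $b$ in $T_{n+1},T_{n+2}$; thus $f$ is homogeneous of degree $b$ in these variables and $d=b$. In case~(1) the first coordinate gives, for each monomial, $e_1+e_2=a+\sum_k a_k e_{k+3}\geq a$ since all $a_k\geq 0$, with equality realised e.g. by $T_1^aT_3^b$ (of degree $[a,b]$ because $b>0$); hence the multiplicity is $d=a$. In both cases $d>0$ by the hypotheses $a>0$, $b>0$.

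It remains to verify the codimension of $V(T_1,T_2,f_0,\dots,f_d)$. Writing $f=\sum_{i=0}^d(-1)^if_iT_1^iT_2^{d-i}$ and noting that only the restrictions $f_i|_{T_1=T_2=0}$ enter this locus, we may take the $f_i$ to be the leading coefficients, i.e. polynomials in the remaining variables. Comparing degrees: in case~(1) each $f_i$ has bidegree $[0,b]$, hence is a degree-$b$ form in the second-degree-one variables of vanishing first degree, namely $T_3$ together with those $T_{k+3}$ with $a_k=0$, that is in $m+1$ variables where $m=\max\{k:a_k=0\}$; in case~(2) each $f_i$ has bidegree $[a+(b-i)a_1,0]$, hence is a form of positive degree in $T_1,\dots,T_n$. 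The decisive step is a genericity argument: as $f$ ranges over its linear system the $d+1$ forms $f_0,\dots,f_d$ range over all tuples of forms of the prescribed positive degrees, so for general $f$ they are general. Because their number $d+1$ does not exceed the number of available variables — $a+1\leq m+1$ by the hypothesis $a\leq\max\{k:a_k=0\}$ in case~(1), and $b+1\leq n$ by the hypothesis $b\leq n-1$ in case~(2) — they form a regular sequence. Thus $V(f_0,\dots,f_d)$ has codimension $d+1$ in the corresponding affine space, and adjoining $T_1=T_2=0$ gives $\operatorname{codim}V(T_1,T_2,f_0,\dots,f_d)=d+3$ in $\mathbb{K}^r$. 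With every hypothesis checked, Theorem~\ref{main-teo} yields the stated presentation.

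I expect the genuine obstacle to be the genericity step. One must justify that the assignment sending $f$ to its tuple of leading coefficients $(f_0,\dots,f_d)$ is dominant onto the space of such tuples, so that generality of $X$ in its linear system transfers to generality of the $f_i$, and that the locus where the $f_i$ fail to be a regular sequence is a proper closed subset. This is precisely where the numerical inequalities on $a$ and $b$ are used, and it is the only point at which the "general ample hypersurface" assumption genuinely enters.
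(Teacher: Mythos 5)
Your proposal is correct and follows essentially the same route as the paper's proof: write $f$ in terms of the two variables cutting the unique codimension-two component of the irrelevant locus, observe that the restricted coefficients $f_i|_{T_1=T_2=0}$ are general forms of positive degree in the $m+1$ (resp.\ $n$) relevant variables, and use the numerical hypotheses $a\leq m$ (resp.\ $b\leq n-1$) to obtain codimension $d+3$, so that Theorem~\ref{main-teo} applies. The genericity step you flag is indeed the crux, but it is unproblematic and is exactly what the paper invokes implicitly: the restricted coefficients are built from disjoint sets of monomials of the complete linear system, so they vary independently and generality of $X$ transfers to generality of the tuple $(f_0,\dots,f_d)$.
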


This criterion allows us to compute the Cox rings of general anticanonical hypersurfaces in smooth toric Fano varieties of Picard rank two (see Corollary~\ref{mt}). We also examine ambient toric varieties of Picard rank three to provide examples of hypersurfaces whose Cox rings are finitely generated but not complete intersections.

The paper is organized as follows: In Section~\ref{sec:cox-ring-embedded}, we present the computation of the Cox ring for embedded varieties within Mori dream spaces, proving our main technical result, Theorem~\ref{mteo}. In Section~\ref{sec:algorithm}, we introduce Algorithm~\ref{alg}. Sections~\ref{sec:proof1} and~\ref{sec:proof2} provide the proofs of Theorem~\ref{main-teo} and Corollary~\ref{main-cor}, respectively. In Section~\ref{sec:applications}, we apply these results to the case where the ambient space is a smooth projective toric variety of Picard rank two, focusing first on general Calabi-Yau hypersurfaces and then on general ample surfaces in toric threefolds. Finally, in Section~\ref{sec:rank3}, we discuss two examples where the ambient space is a toric variety of Picard rank three.

\vspace{3mm}

{\bf Acknowledgements.}
It is a pleasure to thank Hamid Abban, Jarek Buczy\'nski, J\"urgen Hausen, and Bal\'azs Szendr\H{o}i for interesting discussions on the topic of this article.
We also thank the anonymous referee for the helpful comments and suggestions, which greatly contributed to improving the clarity and presentation of the paper.

\section{Cox rings and localizations}
\label{sec:cox-ring-embedded}

\subsection{Cox Sheaves}
We begin by recalling the definition of a Cox sheaf.
Let $\mathbb{K}$ be an algebraically closed field of characteristic $0$.
Let $X$ be a normal variety such that $\Gamma(X,\mathcal{O}^*) = \mathbb{K}^*$.
Denote by ${\rm WDiv}(X)$ the group of Weil divisors of $X$ and by ${\rm PDiv}(X)$ the subgroup of principal divisors.
The {\em divisor class group} of $X$ is defined as
\[
 {\rm Cl}(X) := \frac{{\rm WDiv}(X)}{{\rm PDiv}(X)}.
\]
To any subgroup $K \subseteq {\rm WDiv}(X)$, one can associate the following {\em sheaf of divisorial algebras}:
\[
 \mathcal{S} := \bigoplus_{D \in K} \mathcal{O}_X(D).
\]
Now, assume that the quotient map $K \to {\rm Cl}(X)$ is surjective, and let $K^0 \subseteq {\rm PDiv}(X)$ be its kernel.
Fix a homomorphism $\chi \colon K^0 \to \mathbb{K}(X)^*$ such that ${\rm div}(\chi(D)) = D$ for any $D \in K^0$.
Let $\mathcal{I} \subseteq \mathcal{S}$ be the ideal sheaf locally generated by elements of the form $1 - \chi(D)$, where $D$ ranges over $K^0$.
From now on, assume that ${\rm Cl}(X)$ is finitely generated.
The {\em Cox sheaf} is the ${\rm Cl}(X)$-graded sheaf of algebras defined as
\[
 \mathcal{R}_X := \mathcal{S} / \mathcal{I}.
\]
Choosing a different $K$ or a different $\chi$ does not change the isomorphism class of the Cox sheaf.
If $X$ is $\mathbb{Q}$-factorial, then the Cox sheaf is locally of finite type~\cite{adhl}*{\S 1.6.1}.
In this case, we denote its relative spectrum by
\[
 \widehat{X} := {\rm Spec}_X \mathcal{R}_X.
\]
It can be shown that $\widehat{X}$ is an irreducible, normal, quasiaffine variety~\cite{adhl}*{Construction 1.6.1.3}.

\subsection{Cox Rings}
Under the same assumptions as in the previous subsection, the {\em Cox ring} of $X$ is defined as the ring of global sections of the Cox sheaf:
\[
 \mathcal{R}(X) := \Gamma(X, \mathcal{R}_X).
\]
It can be shown that $X$ can be covered by open subsets of the form
$X_{[D], f} := X \setminus {\rm Supp}({\rm div}(\tilde{f}) + D)$,
where $D$ is any representative for $[D]$, and $\tilde{f} \in \mathcal{O}_X(D)$ is a representative for $f$
(the open subset does not depend on the choice of $D$). The map
\[
 \Gamma(X, \mathcal{R}_X)_f \to \Gamma(X_{[D], f}, \mathcal{R}_X),
 \qquad \frac{g}{f^n} \mapsto \frac{g}{f^n}
\]
is well-defined and can be shown to be an isomorphism of graded algebras.
As a result, we obtain an embedding $\widehat{X} \to \overline{X}$, whose complement is a subset of codimension at least two, called the
{\em irrelevant locus} of $X$~\cite{adhl}*{Construction 1.6.3.1}.\footnote{The assumption in Construction 1.6.3.1 concerning the finite generation of the Cox ring is used only to ensure that $\overline{X}$ is a variety.}

The inclusion $\mathcal{O}_X \to \mathcal{R}_X$, as the degree-zero part of the Cox sheaf, induces a morphism $p_X\colon \widehat{X} \to X$, which is a good quotient for the action of the quasitorus $H_X := {\rm Spec}\, \mathbb{K}[{\rm Cl}(X)]$.
This situation is summarized in the following diagram:
\[
\begin{tikzcd}
\widehat{X} \arrow[r, hookrightarrow] \arrow[d, "p_X"'] & \overline{X} \\
X
\end{tikzcd}
\]
The morphism $p_X$ is called the {\em characteristic space} of $X$, while $\overline{X}$ is referred to as its {\em total coordinate space}. The \emph{irrelevant locus}
$\overline X \setminus \widehat X$
is the vanishing set of a distinguished homogeneous ideal
\(\mathscr I_{\mathrm{irr}}(X) \subseteq \mathcal R(X)\),
called the \emph{irrelevant ideal} of $X$ (see \cite{adhl}*{Def.~1.6.3.2}).

\begin{remark}\label{rem:irrelevant-ideal}
Recall that whenever $X$ is a projective variety and $H$ is any ample divisor on $X$ we have
\[
\mathscr I_{\mathrm{irr}}(X) =
\sqrt{\big\langle \bigoplus_{n>0} \mathcal R(X)_{[nH]} \big\rangle},
\]
that is, the irrelevant ideal is the radical of the ideal generated by all homogeneous pieces of $\mathcal R(X)$
corresponding to positive multiples of $H$ (see \cite{adhl}*{Def.~1.6.3.6}).
In particular, if $f \in \mathcal R(X)_{[nH]}$ for some $n>0$,
then $f$ lies in every prime ideal $\mathfrak p \subseteq \mathcal R(X)$ that contains
$\mathscr I_{\mathrm{irr}}(X)$.

When $X$ is a \emph{toric variety} with Cox ring
\(\mathcal R(X)=\mathbb K[T_1,\dots,T_r]\),
let $\Sigma$ be its defining fan and
$v_1,\dots,v_r$ the primitive generators of its one-dimensional cones.
In this case, the minimal prime ideals of the irrelevant ideal are precisely those of the form
\[
\langle T_i \mid i \in I \rangle,
\]
where the set of rays $\{v_i \mid i \in I\}$ does not span a cone of the fan $\Sigma$, i.e.
\({\rm Cone}(v_i : i \in I) \notin \Sigma\).
This classical description is fundamental in toric geometry, see
\cite{cls}*{Prop.~5.1.6}.
\end{remark}

\subsection{Embedded Varieties}
We recall some basic facts about the Cox ring of an embedded variety~\cite{adhl}*{\S 4.1.1}.
Let $Z$ be a normal variety with a finitely generated Cox ring, and let $p_Z \colon \widehat{Z} \to Z$ be its characteristic space.
Denote by $Z_0$ the subset of smooth points of $Z$. Given a subvariety $X \subseteq Z$,
define $X_0 := X \cap Z_0$, let
$ \widehat{X}$ be the closure of
$p_Z^{-1}(X_0)$ in $\widehat Z$, and let
$p_X\colon \widehat{X}\to X$ denote the
restriction of $p_Z$. The
situation is summarized in the following
diagram
\[
\begin{tikzcd}
p_Z^{-1}(X_0) \arrow[d] \arrow[r, hook]
& \widehat{X} \arrow[d, "p_X"'] \arrow[r, hook]
& p_Z^{-1}(X) \arrow[d] \arrow[r, hook]
& \widehat{Z} \arrow[d, "p_Z"'] \\
X_0 \arrow[r, hook] & X \arrow[r, equals] & X \arrow[r, hook] & Z
\end{tikzcd}
\]

For the rest of this section we will denote by
$K \subseteq {\rm WDiv}(Z)$ a subgroup which surjects onto ${\rm Cl}(Z)$ and such that $X$ is not contained in the support of any element of $K$.
Since Weil divisors on a normal variety are completely determined by their restriction to the smooth locus, every $D \in K$ admits a canonical pullback
\[
\iota^*(D) := \overline{D|_{X_0}} \in \mathrm{WDiv}(X),
\]
where the bar denotes Zariski closure in $X$.
Because no component of $X$ is contained in $\mathrm{Supp}(D)$, this defines a well-defined group homomorphism
\[
\xymatrix{
 K \ar[r]^-{\iota^*} &
 \operatorname{WDiv}(X),
}
\]
which maps principal divisors to principal divisors and, consequently, induces a pullback homomorphism
\[
\iota^* \colon \operatorname{Cl}(Z) \longrightarrow \operatorname{Cl}(X)
\]
on divisor class groups. We recall the following result from~\cite{adhl}*{Cor.~4.1.1.2}.

\begin{proposition}
\label{pro:sheaf}
Let $Z$ be a normal variety with finitely generated Cox ring, and let
$X\subseteq Z$ be a normal subvariety with
$\Gamma(X,\mathcal O_X^*)=\mathbb K^*$.
Let $Z_0$ be the smooth locus of $Z$ and set $X_0:=X\cap Z_0$.
Let $\widehat X$ be the closure of $p_Z^{-1}(X_0)$ in $\widehat Z$, and let
$p_X:\widehat X\to X$ be the restriction of $p_Z$.
Assume that:
\begin{enumerate}
\item $p_Z^{-1}(X_0)$ is big in $\widehat X$;
\item $\widehat X$ is normal;
\item $\iota^*:{\rm Cl}(Z)\to{\rm Cl}(X)$ is an isomorphism.
\end{enumerate}
Then, for any Cox sheaf $\mathcal R_X$ on $X$, there is an isomorphism of
${\rm Cl}(X)$-graded $\mathcal{O}_X$-algebras
\[
 \mathcal{R}_X \simeq (p_X)_* \mathcal{O}_{\widehat{X}}.
\]
In particular, $p_X:\widehat X\to X$ is a characteristic space for $X$.
\end{proposition}

\begin{proof}
This is \cite{adhl}*{Cor.~4.1.1.2}. In the notation of that result,
$X'$ is the open subset $X_0=X\cap Z_0$. Moreover,
\[
 \widehat X\setminus p_X^{-1}(X_0)
 =
 \widehat X\setminus p_Z^{-1}(X_0),
\]
and this has codimension at least two in $\widehat X$ by assumption. The normality of $\widehat X$ and the isomorphism
$\iota^*:{\rm Cl}(Z)\to{\rm Cl}(X)$ are also among the hypotheses. Therefore
\cite{adhl}*{Cor.~4.1.1.2} gives the desired isomorphism
\[
 \mathcal R_X \simeq (p_X)_*\mathcal O_{\widehat X}.
\]
The same corollary also gives that $p_X:\widehat X\to X$ is a characteristic space for $X$.
\end{proof}

\begin{construction}
\label{con:emb}
Let $Z$ be a variety with a finitely generated Cox ring, and let $X \subseteq Z$ be a subvariety satisfying the hypotheses of Proposition~\ref{pro:sheaf}. Let $\tilde{X} \subseteq \overline{Z}$ be the closure of $\widehat{X}$. We define $\tilde{X}_1 \subseteq \tilde{X}$ as the open subset obtained by removing the union of all codimension-one irreducible components of $\tilde{X} \setminus \widehat{X}$. The following diagram represents this setup:
\[
\begin{tikzcd}
\widehat{X} \arrow[r, hook] \arrow[d, hook]
& \tilde{X}_1 \arrow[r, hook]
& \tilde{X} \arrow[d, hook] \\
\widehat{Z} \arrow[rr, hook] &&
\overline{Z}
\end{tikzcd}
\]
In this diagram $\widehat X$ is big in $\tilde{X}_1$, while the complement $\tilde{X} \setminus \tilde{X}_1$ is a union of hypersurfaces.
\end{construction}

\begin{proof}[Proof of Theorem~\ref{mteo}]
Since $\tilde X_1$ is normal and $\widehat X$ is an open subset of $\tilde X_1$, the variety $\widehat X$ is normal. Therefore Proposition~\ref{pro:sheaf} applies. \begin{align*}
 \mathcal{R}(X)
 &= \Gamma(X, \mathcal{R}_X) \\[5pt]
 &= \Gamma(X, (p_X)_*\mathcal{O}_{\widehat{X}})
 & \text{by Proposition~\ref{pro:sheaf}} \\[5pt]
 &= \Gamma(\widehat{X}, \mathcal{O}) \\[5pt]
 &= \Gamma(\tilde{X}_1, \mathcal{O})
 & \text{since $\widehat X$ is big in $\tilde{X}_1$ and the latter is normal} \\[5pt]
 &= \Gamma(\tilde{X} \setminus V(m_1, \dots, m_s), \mathcal{O}) \\[5pt] 
 &= \Gamma\Big(\bigcup_{i=1}^s \tilde{X}_{m_i}, \mathcal{O}\Big) \\[5pt]
 &= \bigcap_{i=1}^s \Gamma(\tilde{X}_{m_i}, \mathcal{O}) \\[5pt]
 &= \bigcap_{i=1}^s (\mathcal{R}(Z)/I(\tilde{X}))_{m_i}
 & \text{since } \Gamma(\tilde{X}, \mathcal{O}) \simeq \mathcal{R}(Z)/I(\tilde{X}).
\end{align*}

\end{proof}

\begin{remark}
\label{rem:coarse-version}
There is also a coarser version of Theorem~\ref{mteo}. Let
$m_1,\dots,m_s\in R$ be the images of homogeneous generators of the irrelevant ideal
$\mathscr I_{\mathrm{irr}}(Z)\subseteq\mathcal R(Z)$. Since
\[
 \widehat X=\tilde X\cap\widehat Z
 =
 \tilde X\setminus V(m_1,\dots,m_s),
\]
the preceding proof gives directly
\[
 \mathcal R(X)=\Gamma(\widehat X,\mathcal O_{\widehat X})
 =
 \bigcap_{i=1}^s R_{m_i}.
\]
This formulation uses the whole irrelevant ideal of $Z$ and does not require the intermediate open subset $\tilde X_1$. The formulation in Theorem~\ref{mteo} is sharper and is the one used in the applications below, because it only localizes along the divisorial part of $\tilde X\setminus\widehat X$.
\end{remark}

\begin{remark}
\label{rem:normal}
In Theorem~\ref{mteo}, if the coordinate ring of $\tilde{X} \subseteq \overline{Z}$ is Cohen-Macaulay, and in particular if it is a complete intersection, then Serre's condition $S_2$ is automatically satisfied~\cite[\href{https://stacks.math.columbia.edu/tag/0342}{Tag 0342}]{stacks-project}. Since this condition is local~\cite[\href{https://stacks.math.columbia.edu/tag/033Q}{Tag 033Q}]{stacks-project}, we conclude that in this case, $\tilde{X}_1$ is normal if and only if it is smooth in codimension one.
\end{remark}

\section{Main algorithm}
\label{sec:algorithm}

In this section, we present an algorithm for computing the intersection of
finitely many localizations of a normal domain. Throughout the section, $R$
is a finitely generated normal domain over a field, with fraction field
$K=\operatorname{Frac}(R)$, and $m_1,\dots,m_s\in R$ are non-zero elements.
All intersections of localizations are taken inside $K$.
For any intermediate ring $A$ such that
$R\subseteq A\subseteq R_{m_1}\cap\cdots\cap R_{m_s}$, and for every
integer $k>0$, we set
\[
 I_k(A):=
 \bigcap_{i=2}^s
 \langle m_1^k\rangle_A:\langle m_i\rangle_A^\infty .
\]
Thus $I_k(A)$ consists of those elements $f\in A$ such that
$f/m_1^k$ belongs to $A_{m_i}$ for every $i=2,\dots,s$.

\begin{algorithm}[H]
\label{alg}
\caption{Computing the intersection of localizations}
    \SetKwInOut{Input}{Input}
    \SetKwInOut{Output}{Output}
    \Input{A finitely generated normal domain $R$ over a field and non-zero elements $m_1,\dots,m_s\in R$}
    \Output{$R_{m_1}\cap\cdots \cap R_{m_s}$, if the algorithm terminates}

    \While{$V(m_1,\dots,m_s)$ has codimension $1$ in ${\rm Spec}(R)$}
    {
        Search the smallest integer $n>0$ such that
        $I_n(R)\not\subseteq \langle m_1^n\rangle_R$.

        Choose a finite generating set of $I_n(R)$, and let $B_n$ be the
        subset of its elements which do not belong to $\langle m_1^n\rangle_R$.

        Define
        \[
        R_0 :=
        R[S_f\mid f\in B_n]\Big/
        \left(
        \langle S_fm_1^n-f\mid f\in B_n\rangle
        :\langle m_1\rangle^\infty
        \right).
        \]

        Replace $R$ by the normalization of $R_0$ in $K$.
    }

    \Return $R$\;
\end{algorithm}

\medskip

This algorithm has been implemented in Magma~\cite{magma} and can be freely
downloaded from:
\begin{center}
 \url{https://github.com/alaface/Cox-subvarieties.git}
\end{center}

\begin{remark}
When $R$ is a graded ring, as in the case of Cox rings, and the elements
$m_1,\dots,m_s$ are homogeneous, the ideals $\langle m_1^k\rangle_R$ and
$\langle m_i\rangle_R$ are homogeneous. Since colon ideals and saturations of
homogeneous ideals are homogeneous, each ideal $I_k(R)$ is homogeneous.

Therefore one may choose a homogeneous generating set for $I_k(R)$. With this
choice, the elements of $B_n$ are homogeneous. If one gives $S_f$ the degree
$\deg(f)-n\deg(m_1)$, then the relation $S_fm_1^n-f$ is homogeneous.
Thus the quotient appearing in Algorithm~\ref{alg} is graded. Moreover, the
normalization of a finitely generated graded domain inside its homogeneous
fraction field is again graded. Hence the algorithm is compatible with the Cox
ring grading.
\end{remark}

Before proving the algorithm, we recall two elementary lemmas.

\begin{lemma}
\label{pro:crit}
Let $A$ be a noetherian normal domain, and let $m_1,\dots,m_s\in A$ be
non-zero elements. Then
\[
A=\bigcap_{i=1}^s A_{m_i}
\quad\Longleftrightarrow\quad
\operatorname{codim}_{\operatorname{Spec}(A)}V(m_1,\dots,m_s)\ge 2.
\]
Equivalently, if $R\subseteq A\subseteq R_{m_1}\cap\cdots\cap R_{m_s}$ and
$A$ is a noetherian normal domain, then $A=R_{m_1}\cap\cdots\cap R_{m_s}$
if and only if $V(m_1,\dots,m_s)$ has codimension at least $2$ in
${\rm Spec}(A)$.
\end{lemma}

\begin{proof}
Let $K=\operatorname{Frac}(A)$. Since $A$ is a noetherian normal domain, it
is a Krull domain. Hence $A$ is the intersection, inside $K$, of the
local rings $A_{\mathfrak p}$, where $\mathfrak p$ runs over the
height-one primes of $A$. Similarly, $A_m$ is obtained by intersecting only
those $A_{\mathfrak p}$ such that $m\notin\mathfrak p$.

It follows that $\bigcap_i A_{m_i}$ is the intersection of the height-one
localizations $A_{\mathfrak p}$ such that $\mathfrak p$ does not contain
all the elements $m_1,\dots,m_s$. If $V(m_1,\dots,m_s)$ has codimension at
least $2$, no height-one prime contains all the $m_i$. Therefore the above
intersection is the intersection over all height-one primes, hence it is $A$.

Conversely, suppose that $V(m_1,\dots,m_s)$ has codimension $1$. Then there
is a height-one prime $\mathfrak p$ containing $(m_1,\dots,m_s)$. Since
$A$ is a Krull domain, the approximation theorem gives an element
$x\in K$ with negative valuation along $\mathfrak p$ and non-negative
valuation along every other height-one prime. Then $x\notin A$. However,
after localizing at any $m_i$, the prime $\mathfrak p$ disappears, because
$m_i\in\mathfrak p$. Hence $x\in A_{m_i}$ for every $i$. Therefore
$A\subsetneq \bigcap_i A_{m_i}$.

The final statement follows because, if
$R\subseteq A\subseteq R_{m_1}\cap\cdots\cap R_{m_s}$, then localizing at
$m_i$ gives $A_{m_i}=R_{m_i}$ for every $i$.
\end{proof}

\begin{lemma}
\label{lem:sat}
Let $A$ be an integral domain, let $m\in A$ be non-zero, and let
$f_1,\dots,f_t\in A$, $r_1,\dots,r_t>0$. Let
$A'=A[f_1/m^{r_1},\dots,f_t/m^{r_t}]\subseteq A_m$. Then
\[
A'\simeq
A[S_1,\dots,S_t]\Big/
\left(
\langle S_1m^{r_1}-f_1,\dots,S_tm^{r_t}-f_t\rangle
:\langle m\rangle^\infty
\right).
\]
\end{lemma}

\begin{proof}
Consider the natural surjective homomorphism
$\varphi:A[S_1,\dots,S_t]\to A'$ sending $S_j$ to $f_j/m^{r_j}$.
After localizing at $m$, its kernel is generated by
$S_j-f_j/m^{r_j}$, or equivalently by $S_jm^{r_j}-f_j$, for
$j=1,\dots,t$. The kernel of $\varphi$ is the contraction of this localized
kernel to $A[S_1,\dots,S_t]$. This contraction is precisely the saturation
with respect to $\langle m\rangle$. This proves the claim.
\end{proof}

\begin{proof}[Proof of Algorithm~\ref{alg}]
Let $R_0$ denote the initial ring, and set
$T:= (R_0)_{m_1}\cap\cdots\cap (R_0)_{m_s}\subseteq K$.
We prove by induction that the current ring $A$ is a finitely generated
normal domain contained in $T$, and that
$A_{m_i}=(R_0)_{m_i}$ for every $i$. This is clear at the beginning,
where $A=R_0$.

Assume that these properties hold for the current ring $A$. If
$V(m_1,\dots,m_s)$ has codimension at least $2$ in ${\rm Spec}(A)$, then
Lemma~\ref{pro:crit} gives $A=T$, and the algorithm returns the desired
intersection.

Suppose instead that $V(m_1,\dots,m_s)$ has codimension $1$ in
${\rm Spec}(A)$. Again by Lemma~\ref{pro:crit}, we have $A\neq T$.
Choose $x\in T\setminus A$. Since $T\subseteq A_{m_1}$, we may write
$x=f/m_1^n$, with $f\in A$ and $n>0$. Since $x\notin A$, one has
$f\notin\langle m_1^n\rangle_A$. Moreover, the condition
$x\in A_{m_i}$, for $i\geq 2$, is equivalent to saying that some power of
$m_i$ sends $f$ into $\langle m_1^n\rangle_A$. In other words,
$f\in I_n(A)$. Thus $I_n(A)\not\subseteq \langle m_1^n\rangle_A$, so the
integer searched for by the algorithm exists.

Let $B_n$ be obtained from a finite generating set of $I_n(A)$ by keeping
only those generators which do not belong to $\langle m_1^n\rangle_A$.
Adjoining the fractions $f/m_1^n$, with $f\in B_n$, is enough to adjoin
$h/m_1^n$ for every $h\in I_n(A)$, since the discarded generators give
fractions already contained in $A$.

By Lemma~\ref{lem:sat}, the saturated quotient appearing in the algorithm is
the subring of $A_{m_1}$ generated by $A$ and the fractions
$f/m_1^n$, with $f\in B_n$. All these fractions belong to $T$, so this
new ring is contained in $T$. Since $T$ is an intersection of normal
domains inside $K$, it is normal. Therefore the normalization of the new
ring in $K$ is still contained in $T$.

Finally, localizing the new ring at any $m_i$ gives the same localization as
before: the new ring contains $A$ and is contained in $A_{m_i}$. Thus
after localization at $m_i$ we get
\[
A_{m_i}\subseteq A'_{m_i}\subseteq A_{m_i},
\]
where $A'$ denotes the normalized new ring. Hence
$A'_{m_i}=A_{m_i}=(R_0)_{m_i}$ for every $i$.
The inductive properties are preserved, and this proves the correctness of
the algorithm.
\end{proof}

\begin{proposition}
\label{prop:termination}
Algorithm~\ref{alg} terminates if and only if
$R_{m_1}\cap\cdots\cap R_{m_s}$ is a finitely generated $R$-algebra.
\end{proposition}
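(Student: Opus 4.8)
The plan is to read termination off the ascending chain of subrings produced by the while loop. Write $A := R_{m_1}\cap\cdots\cap R_{m_s}$, and let $R=R^{(0)}\subseteq R^{(1)}\subseteq\cdots$ be the successive values of the variable $R$ in Algorithm~\ref{alg}. By Lemma~\ref{lem:sat}, together with the equivalence established in the proof of Algorithm~\ref{alg}, one iteration of the while loop at the minimal level $n$ replaces the current domain $R'$ (with $R\subseteq R'\subseteq A$) by $R''=R'[\,f/m_1^{n}:f\in B_n\,]=R'\big[A\cap m_1^{-n}R'\big]$, again a domain with $R\subseteq R''\subseteq A$; since $B_n\neq\emptyset$ the inclusion $R'\subsetneq R''$ is strict. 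By Lemma~\ref{pro:crit} the while condition fails exactly when the current ring equals $A$. Thus the algorithm terminates if and only if the chain $(R^{(t)})$ reaches $A$ in finitely many steps. The forward implication is now immediate: if the algorithm halts then its output is $A$, and this output is obtained from $R$ by adjoining finitely many elements in each of finitely many iterations, so $A$ is a finitely generated $R$-algebra.

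For the converse, assume $A$ is a finitely generated $R$-algebra and suppose, for contradiction, that the algorithm never halts. Then $(R^{(t)})$ is a strictly ascending chain of domains, each properly contained in $A$, with union $R^{(\infty)}:=\bigcup_t R^{(t)}$. Once I know that $R^{(\infty)}=A$ the argument concludes: writing $A=R[g_1/m_1^{N},\dots,g_p/m_1^{N}]$ with $g_l\in R$, each generator lies in $R^{(\infty)}$ and hence in some $R^{(t_l)}$, so that $R^{(T)}=A$ for $T:=\max_l t_l$, contradicting non-termination. Hence everything reduces to the identity $R^{(\infty)}=A$.

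To prove this identity I would equip $A$ with the exhaustive, multiplicative filtration by denominator level $A_{\le n}:=A\cap m_1^{-n}R$, so that $A_{\le 0}=R$, one has $A_{\le a}A_{\le b}\subseteq A_{\le a+b}$, and $A=\bigcup_n A_{\le n}$; call a level $n$ \emph{saturated} for a subring $B$ with $R\subseteq B\subseteq A$ when $A\cap m_1^{-n}B\subseteq B$. Because the algorithm always saturates the least unsaturated level of the current ring, every element it ever produces at a given level is adjoined and therefore lands in $R^{(\infty)}$; the aim is to upgrade this to the assertion that every level is saturated for $R^{(\infty)}$, which is equivalent to $R^{(\infty)}=A$ (intersecting $A\cap m_1^{-n}R^{(\infty)}\subseteq R^{(\infty)}$ over all $n$ and using $R^{(\infty)}_{m_1}=R_{m_1}\supseteq A$). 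The decisive point is that saturating the least unsaturated level $n$ must not re-open any level $\le n$, so that the least unsaturated level is non-decreasing along the chain and, by finite generation, eventually exceeds the top level $N$ of a generating set; at that moment the current ring contains $A_{\le N}$, hence contains $R[A_{\le N}]=A$, and the loop stops.

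I expect this last monotonicity to be the main obstacle. The difficulty is genuine cancellation: a priori an element of $A$ of small denominator level can be expressed as a polynomial, with coefficients in the current ring, in generators of strictly larger level, so enlarging the ring at high level might in principle manufacture new low-level fractions. To control this I would exploit the multiplicativity of the filtration together with the fact that $A$, being an intersection of localizations of the normal domain $R$, is a Krull domain; concretely, the required statement is equivalent to the finite generation of the Rees algebra $\bigoplus_{n\ge 0}A_{\le n}\,t^n$ of the filtration, which encodes the precise sense in which each $A_{\le n}$ is ``generated in level $\le N$'' and rules out the cancellation phenomenon. Once this is established, the chain $(R^{(t)})$ reaches $A$ after finitely many iterations, and the algorithm terminates.
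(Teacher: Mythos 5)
Your reduction of the converse to the identity $R^{(\infty)}=A$ is sound, but the proof of that identity is exactly what is missing, and the route you sketch for it does not close the gap. The monotonicity you yourself isolate as ``the decisive point'' --- that saturating the least unsaturated level never re-opens a lower level --- is left unproved, and there is no apparent reason for it to hold: after adjoining $u=f/m_1^{n}$, an element $h/m_1\in A$ with $h$ a polynomial in $u$ over the old ring is an element of level $n+1$ over the old ring that reappears at level $1$ over the new one, so low levels can indeed become unsaturated again. Your fallback claim, that the statement ``is equivalent to the finite generation of the Rees algebra $\bigoplus_{n\ge 0}A_{\le n}t^n$,'' is asserted rather than proved, and neither that equivalence nor the finite generation of this Rees algebra is established (finite generation of $A$ as an $R$-algebra does not obviously imply it). So the core of the converse direction remains open in your write-up.

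The missing idea --- and this is how the paper argues --- is to track individual elements of $A$ rather than saturation properties of the ambient ring. Write each generator as $f_j/m_1^{r_j}$ with $f_j\in I_{r_j}(R)$. An iteration at level $n$ adjoins $f/m_1^{n}$ for \emph{every} $f\in I_n$ of the current ring (any such $f$ is an $R$-combination of the basis elements, so this follows from what the algorithm actually adjoins), and dividing the relations $f\,m_i^{a_i}=g_i\,m_1^{k}$ by $m_1^{n}$ gives the implication $f\in I_k(R^{(t)})$, $k\ge n$ $\Rightarrow$ $f/m_1^{n}\in I_{k-n}(R^{(t+1)})$. Hence the residual level of each fixed generator, measured against the current ring, drops by at least one per iteration; and if its level $k$ is below the iteration level $n$, minimality of $n$ forces $I_k\subseteq\langle m_1^{k}\rangle$, so the generator already lies in the current ring. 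After at most $\max_j r_j$ iterations all generators are absorbed, the current ring equals $A$, and the loop halts by Lemma~\ref{pro:crit}. Note that this element-by-element argument also proves your identity $R^{(\infty)}=A$ unconditionally, so your reduction is salvageable --- but only once this observation, which your proposal lacks, is in place; no Krull-domain or Rees-algebra input is needed, and the question of levels being ``re-opened,'' which you correctly flag as the obstacle, simply never arises.
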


\begin{proof}
Let $T:=R_{m_1}\cap\cdots\cap R_{m_s}$.

If the algorithm terminates, then by the proof of correctness the final ring is
$T$. At each step one adjoins finitely many fractions and then takes the
normalization of a finitely generated domain over a field. Since normalization
is finite in this setting, the final ring is a finitely generated
$R$-algebra. Thus $T$ is finitely generated over $R$.

Conversely, assume that $T$ is finitely generated over $R$. Choose
generators $\theta_1,\dots,\theta_t$ such that $T=R[\theta_1,\dots,\theta_t]$.
Let $A$ be an intermediate ring appearing during the algorithm. Since
$R\subseteq A\subseteq T$, the same elements generate $T$ as an
$A$-algebra.
For each $j$, write $\theta_j=a_j/m_1^{r_j}$ with $a_j\in A$, choosing
$r_j$ minimal. Let $\rho(A)$ be the maximum of these exponents. If
$\rho(A)=0$, then all $\theta_j$ belong to $A$, hence $A=T$, and the algorithm
stops by Lemma~\ref{pro:crit}.

Assume that $\rho(A)>0$, and let $n$ be the integer chosen by the algorithm.
If $r_j>0$, then $\theta_j\notin A$, so $a_j\notin \langle m_1^{r_j}\rangle_A$.
Since $\theta_j\in T$, we have $a_j\in I_{r_j}(A)$; hence, by the minimality
of $n$, one has $n\leq r_j$.

Moreover $a_j/m_1^n$ belongs to $T$, and $a_j\in I_n(A)$. The new ring
contains all fractions $h/m_1^n$ with $h\in I_n(A)$, because these are
generated by the fractions associated with a set of generators of $I_n(A)$,
up to elements already in $A$. Therefore, in the next intermediate ring,
$\theta_j$ can be written with denominator exponent at most $r_j-n$.
Thus every positive denominator exponent strictly decreases.
\end{proof}

At each step of the algorithm, one has to compute rings obtained by adjoining
fractions to $R$, typically by means of saturation and normalization. The next
criterion provides a way to certify finite generation of the intersection of
localizations without performing all the intermediate saturation and
normalization computations. Namely, once a finite set of candidate generators
has been found, it suffices to verify a codimension condition and normality.

\begin{corollary}
\label{cor:r2}
Let $R$ be a normal finitely generated $\mathbb K$-algebra and let
$m_1,\dots,m_k\in R$ be non-zero elements. Assume that
$q_j:=f_j/m_1^{r_j}$ belongs to $R_{m_1}\cap\cdots\cap R_{m_k}$ for
$j=1,\dots,t$. Let $P:=R[s_1,\dots,s_t]$ and let
\[
I:=\langle s_1m_1^{r_1}-f_1,\dots,s_tm_1^{r_t}-f_t\rangle.
\]
Assume that there exists an ideal $J\subseteq P$ such that
\[
I\subseteq J\subseteq I:\langle m_1\rangle^\infty
\]
and, setting $A:=P/J$, assume moreover that
\[
\operatorname{codim}_{\operatorname{Spec}(A)}V(m_1,\dots,m_k)\geq 2.
\]
Let $\overline A$ be the image of $A$ in $R_{m_1}$ under the homomorphism
$s_j\mapsto q_j$. Equivalently, $\overline A\simeq P/(I:\langle m_1\rangle^\infty)$.
Let $\overline A^\nu$ be the normalization of $\overline A$ in
$\operatorname{Frac}(R)$. Then
\[
\overline A^\nu=R_{m_1}\cap\cdots\cap R_{m_k}.
\]
\end{corollary}
\begin{proof}
Consider the homomorphism $\varphi:P\to R_{m_1}$ defined by
$s_j\mapsto q_j$. By Lemma~\ref{lem:sat}, its kernel is
$\ker(\varphi)=I:\langle m_1\rangle^\infty$. Hence the image of $A=P/J$
under $\varphi$ is $\overline A\simeq P/(I:\langle m_1\rangle^\infty)$ and,
under this identification, $\overline A=R[q_1,\dots,q_t]\subseteq R_{m_1}$.
Since each $q_j$ belongs to $R_{m_1}\cap\cdots\cap R_{m_k}$, we have
$\overline A\subseteq R_{m_i}$ for every $i=1,\dots,k$.
The kernel of the induced map $A\to\overline A$ is
$(I:\langle m_1\rangle^\infty)/J$, and this ideal is annihilated by a power of
$m_1$. Therefore $\operatorname{Spec}(\overline A)$ is obtained from
$\operatorname{Spec}(A)$ by discarding only irreducible components contained
in $V(m_1)$. Since $V(m_1,\dots,m_k)$ has codimension at least $2$ in
$\operatorname{Spec}(A)$, the closed subset of $\operatorname{Spec}(\overline A)$
defined by $m_1,\dots,m_k$ also has codimension at least $2$.
For every $i$, localizing the inclusions
$R\subseteq\overline A\subseteq R_{m_i}$ at $m_i$ gives
$\overline A_{m_i}=R_{m_i}$. Since normalization commutes with localization
and $R_{m_i}$ is normal, we get $(\overline A^\nu)_{m_i}=R_{m_i}$ for all
$i=1,\dots,k$. Moreover, $\overline A^\nu$ is finite over $\overline A$,
because $\overline A$ is a finitely generated $\mathbb K$-algebra. Hence the
inverse image of $V(m_1,\dots,m_k)$ in $\operatorname{Spec}(\overline A^\nu)$
still has codimension at least $2$.
Now $\overline A^\nu$ is normal. Applying Lemma~\ref{pro:crit} to
$\overline A^\nu$, we obtain
\[
\overline A^\nu
=
(\overline A^\nu)_{m_1}\cap\cdots\cap(\overline A^\nu)_{m_k}
=
R_{m_1}\cap\cdots\cap R_{m_k}.
\]
The last assertion follows immediately if $\overline A$ is normal.
\end{proof}

\section{Proof of Theorem~\ref{main-teo}}
\label{sec:proof1}

\begin{proof}[Proof of Theorem~\ref{main-teo}]
Let
\[
A:=\mathbb K[T_1,\dots,T_r]/\langle f\rangle.
\]
We first verify the hypotheses of Theorem~\ref{mteo} for the embedding
$X\subseteq\mathbb P$. Since $\mathbb P$ is toric, its Cox ring is finitely generated.
The required isomorphism
\[
{\rm Cl}(\mathbb P)\xrightarrow{\sim}{\rm Cl}(X)
\]
follows from \cite[Thm.~1]{rs} when $n\geq 4$. In dimension $3$ it follows from
\cite[Thm.~1]{rs2}, applied to a very general $X\in |H|$ under the assumption that $H+K_{\mathbb P}$ is base point free.

By Bertini's theorem, the possible singularities of a general member
$X\in |H|$ occur along $X\cap \mathbb P_{\rm Sing}$. Since $X$ is normal and does not contain any irreducible component of $\mathbb P_{\rm Sing}$, the open subset
\[
X_0:=X\cap\mathbb P_{\rm reg}
\]
is big in $X$.

We now identify the divisorial part of $\tilde X\setminus\widehat X$.
By assumption, the irrelevant locus of $\mathbb P$ has a unique codimension-two component, namely $V(T_1,T_2)$. All other components of the irrelevant locus have codimension at least three in the total coordinate space. For a general $f$, none of these components is contained in $V(f)$, and hence their intersection with $\tilde X$ has codimension at least two in $\tilde X$. Thus the codimension-one part of $\tilde X\setminus\widehat X$ is contained in $V(T_1,T_2)\cap\tilde X$, and
\[
 \tilde X_1=\tilde X\setminus V(T_1,T_2)
 =D(T_1)\cup D(T_2).
\]
In particular, $p_Z^{-1}(X_0)$ is big in $\widehat X$.

We now construct an affine model for this open set and prove its normality.
Set
\[
P:=\mathbb K[T_1,\dots,T_r,S_1,\dots,S_d]
\]
and let $I\subseteq P$ be the ideal generated by
\[
g_0:=f_0+T_2S_1,
\qquad
 g_i:=f_i+T_1S_i+T_2S_{i+1}\quad (1\leq i\leq d-1),
\qquad
 g_d:=f_d+T_1S_d.
\]
Let $C:=P/I$. After localizing at $T_2$, the relations $g_0,\dots,g_{d-1}$ allow us to eliminate $S_1,\dots,S_d$ recursively. The remaining relation is equivalent to $f=0$, and hence $C_{T_2}\cong A_{T_2}$. Similarly, after localizing at $T_1$, one eliminates the variables in the opposite direction, starting from $S_d=-f_d/T_1$, and obtains $C_{T_1}\cong A_{T_1}$.

We prove that $C$ is normal. Since $I$ is generated by $d+1$ elements, all minimal components of $V(I)$ have codimension at most $d+1$. On the other hand, no minimal component of $V(I)$ is contained in $V(T_1,T_2)$: indeed, modulo $T_1,T_2$ the generators become $f_0,\dots,f_d$, and the hypothesis
\[
\operatorname{codim} V(T_1,T_2,f_0,\dots,f_d)=d+3
\]
rules out such a component. On the open set $D(T_1)\cup D(T_2)$, the previous local computations show that $I$ has codimension $d+1$. Hence $I$ has codimension $d+1$ everywhere. Since $P$ is a polynomial ring and $I$ is generated by $d+1$ elements, the quotient $C$ is a complete intersection. In particular, $C$ is Cohen--Macaulay and satisfies Serre's condition $S_2$.

We next check Serre's condition $R_1$. The closed subset $V(T_1,T_2)\subseteq \operatorname{Spec}(C)$ has codimension $2$, because
\[
C/\langle T_1,T_2\rangle
\cong
\mathbb K[T_1,\dots,T_r,S_1,\dots,S_d]/
\langle T_1,T_2,f_0,\dots,f_d\rangle
\]
and the latter ideal has codimension $d+3$ in the polynomial ring $P$.
Thus no codimension-one point of $\operatorname{Spec}(C)$ lies over $V(T_1,T_2)$. Every codimension-one point therefore lies in $D(T_1)\cup D(T_2)$, where $C$ is identified with a localization of $A$. This open set is the pullback of the normal hypersurface $X$ over the smooth locus of the ambient toric variety, and normality gives regularity in codimension one. Hence $C$ satisfies $R_1$.

By Serre's criterion, $C$ is normal. Since $V(T_1,T_2)$ has codimension $2$ in $\operatorname{Spec}(C)$, the open subset $D(T_1)\cup D(T_2)$ is big in $\operatorname{Spec}(C)$ and regular functions on it extend uniquely to $\operatorname{Spec}(C)$. Therefore
\[
C=C_{T_1}\cap C_{T_2}=A_{T_1}\cap A_{T_2}.
\]
The preceding normality also shows that $\tilde X_1=D(T_1)\cup D(T_2)$ is normal. Theorem~\ref{mteo} now applies, with $m_1=T_1$ and $m_2=T_2$, and gives
\[
\mathcal R(X)=A_{T_1}\cap A_{T_2}=C.
\]
This is exactly the quotient displayed in the statement.
\end{proof}

\begin{remark}
\label{main-rem}
Let $(\mathbb{P},H)$ be a polarized pair consisting of a
normal projective toric variety of dimension at least $4$
and an ample Cartier divisor.
Let $X\in |H|$ be general, defined by a homogeneous polynomial $f \in \mathcal{R}(\mathbb{P}) = \mathbb{K}[T_1, \dots, T_r]$ in Cox coordinates.
Let $R := \mathbb{K}[T_1, \dots, T_r]/\langle f\rangle$.
Assume that the irrelevant locus of $\mathbb P$ has a component of codimension $2$, say $V(T_1,T_2)$, and that every monomial generator of the irrelevant ideal is divisible by either $T_1$ or $T_2$.
Let $d>0$ be the multiplicity of $f$ along this component. We can write
\[
 f = \sum_{i=0}^d(-1)^if_iT_1^iT_2^{d-i}.
\]
Then the Cox ring of $X$ contains the homogeneous elements
\[
 S_1 := -\frac{f_0}{T_2},
 \quad
 S_2 := -\frac{f_1+S_1T_1}{T_2},
 \quad
 \dots
 \quad
 S_d := -\frac{f_{d-1}+S_{d-1}T_1}{T_2}
\]
because, as shown in the proof of Theorem~\ref{main-teo}, each of them belongs to $R_{T_1}\cap R_{T_2}$.
\end{remark}

Theorem~\ref{main-teo} deals with general
hypersurfaces. In the following example we
show that the generality hypothesis is indeed
necessary.

\begin{example}
Let ${\rm Bl}_p\mathbb P^4$ be the blowing-up
of $\mathbb P^4$ at an invariant point $p$
with exceptional divisor $E$ and pullback
of a general hyperplane $H$.
Let $X\subseteq {\rm Bl}_p\mathbb P^4$ be
a smooth hypersurface in $|5H-3E|$.
A defining equation for $X$ is
\[
 f_3T_5^2 - f_4T_5T_6 + f_5T_6^2 = 0,
\]
where $f_i\in\mathbb K[T_1,T_2,T_3,T_4]$
is homogeneous of degree $i$, for any $i=3,4,5$.
In the general case, by Theorem~\ref{main-teo} the
Cox ring and grading matrix of $X$ are

{\small
\[
 \frac{\mathbb K[T_1,\dots,T_6,S_1,S_2]}
 {\langle f_3+T_6S_2, f_4+T_5S_2+T_6S_1, f_5 + T_5S_1\rangle}
 \qquad
\begin{bmatrix}
  \hfill 1 & \hfill 1 & \hfill 1 & \hfill 1 & \hfill 1 & \hfill 0 & \hfill 4 & \hfill 3\\
  \hfill -1 & \hfill -1 & \hfill -1 & \hfill -1 & \hfill 0 & \hfill 1 & \hfill -5 & \hfill -4
\end{bmatrix}
\]
}

If we consider also particular cases, the number of
generators can decrease. For instance,
if $f_4$ vanishes identically,
by applying the same methods as in
Theorem~\ref{main-teo} the Cox ring and grading
matrix of $X$ are

{\small
\[
 \frac{\mathbb K[T_1,\dots,T_6,S_1]}
 {\langle f_3+T_6^2S_1, f_5 + T_5^2S_1\rangle}
 \qquad
\begin{bmatrix}
  \hfill 1 & \hfill 1 & \hfill 1 & \hfill 1 & \hfill 1 & \hfill 0 & \hfill 3 \\
  \hfill -1 & \hfill -1 & \hfill -1 & \hfill -1 & \hfill 0 & \hfill 1 & \hfill -5
\end{bmatrix}.
\]
}

\end{example}

\section{Proof of Corollary~\ref{main-cor}}
\label{sec:proof2}
Let us briefly recall
the classification of smooth projective
toric varieties with Picard rank two.
The following classification result, in the
complete case,
has been proved in~\cite{kl}*{Theorem 1
and Theorem 2} using the language
of fans. Here we provide
a shorter argument via the Cox ring
description.

\begin{proposition}
\label{prop:fano}
Let $\mathbb P$ be a smooth projective
toric variety of Picard rank two.
Then the grading matrix of the Cox ring
${\mathbb K}[T_1,\dots,T_{n+2}]$ of $\mathbb P$
and the irrelevant ideal are
\[
 \begin{bmatrix}
  1 & \cdots & 1 & 0 & -a_1 & \cdots & -a_k\\
  \undermat{n-k+1}{0 & \cdots & 0} & 1 & 1 & \cdots & 1
 \end{bmatrix},
 \qquad
 \langle T_1,\dots,T_{n-k+1}\rangle
 \cap
 \langle T_{n-k+2},\dots,T_{n+2}\rangle\\[10pt]
\]
where $1\leq k\leq n-1$,
and $0\leq a_1\leq\dots\leq a_k$.
Moreover $\mathbb P$ is Fano if and only if
$\sum_ia_i < n-k+1$.
\end{proposition}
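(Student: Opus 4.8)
The plan is to argue entirely on the side of the grading, i.e.\ in $\operatorname{Cl}(\mathbb{P})_{\mathbb{R}}\cong\mathbb{R}^2$, using the GIT/bunch-of-cones description of a simplicial toric variety. Since $\mathbb{P}$ is smooth and complete of Picard rank two, $\operatorname{Cl}(\mathbb{P})$ is free of rank two, its Cox ring is a polynomial ring $\mathbb{K}[T_1,\dots,T_{n+2}]$, and the grading is recorded by the weights $w_i:=\deg T_i\in\mathbb{Z}^2$. Projectivity provides a full-dimensional ample chamber $\lambda$ and forces the effective cone $\operatorname{Eff}=\operatorname{cone}(w_1,\dots,w_{n+2})$ to be pointed (it spans an angle $<180^\circ$). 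The one input from smoothness I would use is the Gale-duality statement that the maximal cone obtained by removing a complementary pair $\{i,j\}$ is smooth if and only if $\{w_i,w_j\}$ is a $\mathbb{Z}$-basis of $\operatorname{Cl}(\mathbb{P})\cong\mathbb{Z}^2$, i.e.\ $\det(w_i,w_j)=\pm1$.

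First I would describe the chamber combinatorially. Ordering the weights by direction, $\lambda$ is the cone spanned by two consecutive weight-rays $\rho^-,\rho^+$; each is a GIT wall, hence carries a weight, and pairing those two wall-weights while invoking smoothness shows that both are primitive and together form a basis. After a change of basis in $\operatorname{GL}_2(\mathbb{Z})$ I may assume $\rho^-=\mathbb{R}_{\ge0}(1,0)$ and $\rho^+=\mathbb{R}_{\ge0}(0,1)$, so that $\lambda$ is the first quadrant. Writing $G_1$ for the weights at direction $\le\rho^-$ and $G_2$ for those at direction $\ge\rho^+$, the complementary pairs $\{i,j\}$ with $\lambda\subseteq\operatorname{cone}(w_i,w_j)$ are exactly those with $i\in G_1$ and $j\in G_2$; reading off the irrelevant ideal $B=\langle\prod_{l\notin\sigma}T_l\rangle$ then yields $\langle T_i:i\in G_1\rangle\cap\langle T_j:j\in G_2\rangle$, which is the claimed shape.

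The heart of the argument, and the step I expect to be the main obstacle, is to pin down the weights themselves to the stated normal form; the key claim is that at least one wall of $\lambda$ is an extreme ray of $\operatorname{Eff}$. I would prove this by contradiction: if both walls were interior, there would be a weight $(1,-c)$ with $c>0$ beyond $\rho^-$ (the coordinate $1$ coming from smoothness against $(0,1)$) and a weight $(-d,1)$ with $d>0$ beyond $\rho^+$ (similarly); smoothness of this complementary pair forces $1-cd=\pm1$, hence $cd=2$, and then these two weights span more than $180^\circ$, contradicting pointedness of $\operatorname{Eff}$. Swapping the two coordinates if necessary, I may therefore assume $\rho^-$ is extreme, so that $G_1$ lies entirely on $\rho^-$ and, by smoothness against the other wall-weight, every element of $G_1$ equals the primitive vector $(1,0)$. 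The same two determinant computations show every weight of $G_2$ has second coordinate $1$, i.e.\ is of the form $(-a,1)$ with $a\ge0$, the value $a=0$ being realized by the wall-weight on $\rho^+$; all complementary determinants are then automatically $\pm1$, so no further constraint arises. Sorting the $a$'s gives the matrix, and the bounds $1\le k\le n-1$ follow because each group must contain at least two weights (otherwise one of the two $\mathbb{K}^*$-factors acts trivially on the quotient and the Picard rank drops).

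Finally, for the Fano criterion I would use $-K_{\mathbb{P}}=\sum_i w_i=\deg(T_1\cdots T_{n+2})$, which in the normal form equals $(n-k+1-\sum_i a_i,\ k+1)$. Since the ample cone is the open first quadrant, $-K_{\mathbb{P}}$ is ample exactly when both coordinates are positive; the second is always positive, so the condition reduces to $\sum_i a_i<n-k+1$.
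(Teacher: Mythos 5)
Your proof is correct and follows essentially the same route as the paper's: both convert smoothness into unimodularity ($\det(w_i,w_j)=\pm 1$) of the complementary weight pairs defining the irrelevant ideal, normalize the ample chamber to the first quadrant, use pointedness of the effective cone to force one wall to be an extreme ray of ${\rm Eff}$, and obtain the Fano criterion by asking $\sum_i w_i$ to lie in the interior of the nef cone. The only minor divergence is your justification of $1\leq k\leq n-1$ via redundancy of a $\mathbb{K}^*$-factor (Picard rank drop), where the paper instead invokes the description of the moving cone as $\bigcap_i \operatorname{cone}(w_j : j\neq i)$; both arguments work.
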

\begin{proof}
Let $w_i = \deg(T_i)$ for any $i$
and let $v_i$
be the corresponding primitive
generator of the one dimensional
cone of the fan $\Sigma =\Sigma(\mathbb P)$.
Let $p_\sigma\in \mathbb P$
be the invariant point defined by
a maximal cone $\sigma\in\Sigma$.
Since $p_\sigma$ is a smooth point of
$\mathbb P$ it is a factorial point, that is the
local class group ${\rm Cl}(\mathbb P,p_\sigma)$ is
trivial (in a toric variety being smooth
and being factorial is the same).
By~\cite{adhl}*{Proposition 3.3.1.5}
we have
\[
 {\rm Cl}(\mathbb P,p_\sigma)\simeq
 {\rm Cl}(\mathbb P)/\langle w_i\, :\, v_i\notin\sigma(1)\rangle.
\]
The free abelian group in the above denominator
is generated by the two classes $w_i,w_j$
such that ${\mathbb Q}_{\geq 0}\cdot v_i$ and
${\mathbb Q}_{\geq 0}\cdot v_j$ are not one
dimensional rays of $\sigma$.
Thus the factoriality of $\mathbb P$ at $p_\sigma$
is equivalent to ask for the matrix $(w_i,w_j)$
to have determinant $\pm 1$.

Since $\mathbb P$ is smooth projective
its nef cone is full dimensional.
Let $w$ be an ample class. Then
by~\cite{adhl}*{Corollary 1.6.3.6}
the irrelevant ideal is generated by
the monomials $T_iT_j$ such that
$w\in{\rm cone}(w_i,w_j)$. Equivalently
the complement $\{1,\dots,n+2\}\setminus\{i,j\}$
of any such pair of indices $i,j$ defines
a maximal cone of $\Sigma$. Thus
by the previous argument we have
that each such matrix $(w_i,w_j)$
has determinant $\pm 1$.
In particular we can assume without loss of
generality that the nef cone of $\mathbb P$
is the first quadrant.
If there are other rays different from $(1,0)$ and
$(0,1)$, by smoothness hypothesis they must
be of the form $(-a,1)$ or $(1,-b)$, for $a,b > 0$.
Since the variety is complete the effective cone
is pointed so that, if there exists at least
one ray $(-a,1)$, there can be no rays of the form
$(1,-b)$. So up to exchanging the axes
we can assume that there are no
rays of the form $(1,-b)$ and in particular
the ray $(1,0)$ is an extremal ray of the
effective, moving and nef cone at the
same time. The situation is summarized
in the following picture.
 \begin{center}
 \begin{tikzpicture}[scale=1]
 \draw[line width=0mm,fill=gray!60!white] (0,1) -- (0,0) -- (1,0);
 \foreach \x/\y in {1/0,0/1}
 {\draw [line width=1pt] (0,0)-- (\x,\y);
 \draw [color=black,line width=1pt,fill=black] (\x,\y) circle (1pt);}
 \draw [color=black,line width=1pt,fill=black] (0,0) circle (1pt);
 \draw (0,1) node[above right] {{\footnotesize $w_{n-k+2}$}};
 \draw (1,0) node[right] {{\footnotesize $w_1,\dots, w_{n-k+1}$}};
 \draw [line width=1pt] (0,0)-- (-2,1);
  \draw [color=black,line width=1pt,fill=black] (-2,1) circle (1pt);
 \draw [line width=1pt,dotted] (-0.3,1)-- (-1.7,1);
 \draw (-2,1) node[above] {{\footnotesize $w_{n+2}$}};
 \end{tikzpicture}
 \end{center}
Since the $(1,0)$ is an extremal ray of the
moving cone, by~\cite{adhl}*{Proposition 3.3.2.3}
we deduce that $n-k+1\geq 2$. For the same reason
we have $k\geq 1$.
Finally $\mathbb P$ is Fano if and only if
the sum $\sum_iw_i$ lies in the
interior of the nef cone, equivalently
if $\sum_ia_i < n-k+1$.
\end{proof}

Observe that a smooth projective toric variety
$\mathbb P$ has a component of the irrelevant
locus of codimension $2$ when either $k=n-1$ or
$k=1$, with the same notation of
Proposition~\ref{prop:fano}.
The grading matrices for these two cases are
those which appear in the statement of
Corollary~\ref{main-cor}.

\begin{proof}[Proof of Corollary~\ref{main-cor}]
Let $f$ be a defining polynomial for $X$.

We prove case $(1)$.
Since the degree of $f$ is $[a,b]$, in Cox coordinates
we can write
\[
 f = \sum_{i=0}^a (-1)^if_iT_1^{a-i}T_2^i,
\]
where $f_0,\dots,f_a$ are homogeneous of degree
$[0,b]$. If we set $m := \max\{k\, :\, a_k = 0\}$,
when we evaluate $f_0,\dots,f_a$ in $T_1=T_2=0$,
we get $a+1$ general homogeneous polynomials
of degree $b$ in the $m+1$ variables
$T_3,\dots,T_{m+3}$. Therefore the
condition $a\leq m$
guarantees that the codimension of $V(T_1,T_2,f_0,
\dots,f_a)$ is at least $a+3$, so that $f$
satisfies the hypotheses of
Theorem~\ref{main-teo}.

We prove $(2)$.
In this case we can write
\[
 f = \sum_{i=0}^b(-1)^if_iT_{n+1}^{b-i}T_{n+2}^i,
\]
where $f_i$ is homogeneous of degree
$a+ia_1$ in the variables $T_1,\dots,T_n$, for
any $0\leq i\leq b$. Under the hypothesis
$b\leq n-1$, these general $b+1$ polynomials
impose independent conditions, so that
the codimension of $V(T_{n+1},T_{n+2},f_0,\dots,f_b)$
is $b+3$. Therefore also in this case $f$
satisfies the hypotheses of
Theorem~\ref{main-teo}.
\end{proof}

\begin{remark}
 \label{rem:case2}
 Let us consider case (2) of Corollary~\ref{main-cor}, with
 $n=4$ and $a_1 = 1$, so that
 the toric variety $\mathbb P$ is
 the blowing-up of $\mathbb P^4$ at one point $p$, and
 a hypersurface $X\subseteq Z$ of degree
 $[a,b]$ corresponds
 to the strict transform of a hypersurface of degree
 $a+b$ in $\mathbb P^4$, having multiplicity $a$
 at the point $p$. If we fix $b=4$ and $a=1$, we
 have that $X$ can be described as the blow-up
 in one point of a hypersurface of degree $5$ in
 $\mathbb P^4$. In particular this example can be treated using the techniques of~\cite{hlk}, looking for generators
 corresponding to surfaces in $X$, having
 bounded multiplicity at the point $p$.
 With the help of the algebra computer
 system Magma~\cite{magma} we found $223$
 generators with multiplicity at most $21$.
 This may suggest that in this case the
 Cox ring is not finitely generated (even if
 we are not able to prove that).
 \end{remark}

\begin{remark}
    \label{rem:P1n}
If we are either in case (1) of Corollary~\ref{main-cor} with $a_i=0$ for
any $0\leq i\leq n-1$, or in case (2), with $a_1=0$, the variety $\mathbb P$ is
$\mathbb P^1\times \mathbb P^n$. Therefore
a direct consequence of Corollary~\ref{main-cor}
is that the Cox ring of a general hypersurface
of degree $[a,b]$ in $\mathbb P^1\times
\mathbb P^n$, with $a\leq n-1$,
is the one described in
Theorem~\ref{main-teo}.
The same result has been proved
by Ottem in~\cite[Thm.~1.1]{ott} using different techniques.
In the remaining cases the Cox ring is not
finitely generated because the effective cone
is not closed~\cite[Thm.~5.6]{ott}.

If the hypersurface $X\subseteq\mathbb P^1
\times\mathbb P^n$ has degree $[a,1]$, it is
possible to give the following
more precise characterisation
(generalizing~\cite{ott}*{Ex.~3.1}).
\end{remark}

\begin{proposition}\label{prop:scroll}
Let $X = V(f) \subseteq {\mathbb P}^1\times
{\mathbb P}^{n-1}$, with $n\geq 4$, where $f$ is a general
homogeneous polynomial of degree $[d,1]$. Let
$a\in\{0,\dots,n-2\}$ be the residue of $d$ modulo $n-1$.
Then $X$ is the scroll
\[
    S(\undermat{n-a-1}{0,\dots,0},\undermat{a}{1,\dots,1}).\
\]

In particular, if $(n-1)\mid d$ then $X \simeq {\mathbb P}^1\times
    {\mathbb P}^{n-2}$.
\end{proposition}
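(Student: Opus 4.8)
The plan is to exploit the projective-bundle structure of $X$ over the first factor. Writing $[s:t]$ for coordinates on $\mathbb{P}^1$ and $[x_1:\dots:x_n]$ for coordinates on $\mathbb{P}^{n-1}$, a form of degree $[d,1]$ is linear in the $x_j$, so $f=\sum_{j=1}^n g_j(s,t)\,x_j$ with each $g_j$ homogeneous of degree $d$ in $s,t$. First I would record that for general $f$ the $g_j$ have no common zero on $\mathbb{P}^1$ (already two general binary forms of degree $d$ are coprime), so that the linear form $\sum_j g_j x_j$ is nonzero on every fibre. Hence the first projection $\pi\colon X\to\mathbb{P}^1$ exhibits $X$ fibrewise as the hyperplane $\{\sum_j g_j([s:t])\,x_j=0\}\subseteq\mathbb{P}^{n-1}$, and these hyperplanes are exactly the kernels of the fibrewise surjection defined by the bundle map $\phi=(g_1,\dots,g_n)\colon\mathcal{O}_{\mathbb{P}^1}^{\oplus n}\to\mathcal{O}_{\mathbb{P}^1}(d)$.

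Next I would identify $X$ with the projectivised kernel. Setting $\mathcal{K}:=\ker\phi$, surjectivity of $\phi$ makes $\mathcal{K}$ a vector bundle of rank $n-1$ with $\det\mathcal{K}=\mathcal{O}_{\mathbb{P}^1}(-d)$, and the fibrewise description gives $X\cong\mathbb{P}(\mathcal{K})$ as a bundle over $\mathbb{P}^1$. By Grothendieck's splitting theorem $\mathcal{K}\cong\bigoplus_{i=1}^{n-1}\mathcal{O}_{\mathbb{P}^1}(b_i)$ with $\sum_i b_i=-d$. Since $\mathbb{P}(\mathcal{K})$ is unchanged under twisting $\mathcal{K}$ by a line bundle, the isomorphism class of $X$ as a bundle depends only on the splitting type $(b_i)$ up to a common shift; this is precisely the data recorded by the scroll symbol $S(\dots)$ once the least entry is normalised to $0$. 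At this point everything reduces to computing the splitting type of $\mathcal{K}$ for general $f$.

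The main obstacle is exactly that the general $\mathcal{K}$ is balanced, i.e. $\mathcal{K}\cong\mathcal{O}(-q)^{\oplus(n-1-a)}\oplus\mathcal{O}(-q-1)^{\oplus a}$, where $d=q(n-1)+a$ with $0\le a\le n-2$ (so $a\equiv d \bmod (n-1)$); normalising then yields the scroll with $n-a-1$ zeros and $a$ ones, and $(n-1)\mid d$ forces $a=0$, hence $X\cong\mathbb{P}^1\times\mathbb{P}^{n-2}$. I would read the splitting type off from $h^0(\mathbb{P}^1,\mathcal{K}(m))=\dim\ker\big(\mathbb{K}[s,t]_m^{\oplus n}\to\mathbb{K}[s,t]_{m+d}\big)$, the map being $(h_j)\mapsto\sum_j g_j h_j$. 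The hard part is that these multiplication-by-general-forms maps all have maximal rank; this is a standard maximal-rank statement for binary forms, which I would settle by exhibiting one explicit choice of the $g_j$ (suitable monomials) for which each such map has maximal rank, and then invoking upper-semicontinuity of the rank to pass to general $g_j$ — only finitely many values of $m$ (those near $q$) impose conditions, so the relevant open loci are nonempty and their finite intersection is nonempty. Maximal rank for all $m$ forces the numbers $h^0(\mathcal{K}(m))$ to coincide with those of the balanced bundle, which pins down $\mathcal{K}$. Finally I would match the normalised splitting type to the scroll notation, recording the twist–duality convention for $S(\dots)$ in force so that the $\mathcal{O}(1)$-summands are counted by $a$, as in the statement (a quick sanity check being $d=1$, $n=3$, where $X$ is a general $(1,1)$-divisor in $\mathbb{P}^1\times\mathbb{P}^2$, i.e. the blow-up of $\mathbb{P}^2$ at a point, $S(0,1)=\mathbb{F}_1$).
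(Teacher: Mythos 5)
Your proposal is correct, but it takes a genuinely different route from the paper's. The paper stays inside its Cox-ring framework: for $d\leq n-1$ it invokes Remark~\ref{rem:P1n} (i.e.\ Theorem~\ref{main-teo}, equivalently Ottem's result) to present $\mathcal{R}(X)$ as $\mathbb{K}[T_1,\dots,T_{n+2},S_1,\dots,S_d]/I$, then uses the generality of the linear forms $f_0,\dots,f_d$ to eliminate $T_3,\dots,T_{d+3}$ and reads off from the resulting grading matrix that $X$ is the toric variety $S(0,\dots,0,1,\dots,1)$ with $d$ ones; for $d\geq n$ it adjoins only $n$ new variables and re-embeds $X$ as a general hypersurface of degree $[d-n+1,1]$ in a new copy of $\mathbb{P}^1\times\mathbb{P}^{n-1}$, iterating until the degree drops below $n$ --- this iteration is where the congruence $a\equiv d \pmod{n-1}$ arises. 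You instead identify $X$ with the projectivized kernel bundle $\mathbb{P}(\mathcal{K})$, $\mathcal{K}=\ker\bigl(\mathcal{O}^{\oplus n}\to\mathcal{O}(d)\bigr)$, and compute the generic splitting type via Grothendieck's theorem and maximal rank of multiplication by general binary forms (monomial specialization plus semicontinuity; in fact only the two twists $m=q-1,q$ are needed, since $h^0(\mathcal{K}(q-1))=0$, $h^0(\mathcal{K}(q))=n-1-a$ and $\deg\mathcal{K}=-d$ already force the balanced type). Your argument is independent of the paper's machinery --- no Cox rings and no Lefschetz-type input --- and treats all $d$ uniformly, with the congruence emerging from the balanced splitting type rather than from an iteration; what it does not produce, and the paper's proof does, is the explicit Cox ring presentation of $X$. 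One point needs more care than your sanity check supplies: in the ``space of lines'' convention you use, the normalized kernel is $\mathcal{O}(1)^{\oplus(n-1-a)}\oplus\mathcal{O}^{\oplus a}$, i.e.\ it has $n-1-a$ ones, and it is the \emph{dual} bundle (the quotient-convention symbol, which is what the paper's grading matrices encode) that has exactly $a$ ones. Since $S(0^{n-a-1},1^{a})$ and $S(0^{a},1^{n-1-a})$ are in general non-isomorphic (by Kleinschmidt's classification, cited in Proposition~\ref{prop:fano}), this is a real dichotomy, and your $n=3$ check cannot detect it because rank-two bundles are self-dual up to twist. A decisive check is $n=4$, $d=1$: there $X$ is the blow-up of $\mathbb{P}^3$ along a line, which is $S(0,0,1)$, consistent with dualizing $\mathcal{K}\simeq\mathcal{O}\oplus\mathcal{O}\oplus\mathcal{O}(-1)$ and not with its lines-convention normalization $S(0,1,1)$.
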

\begin{proof}
Let us write $f = \sum_{i=0}^d (-1)^if_i
T_1^{d-i}T_2^i$, where $f_0,\dots,f_d$
are general linear polynomials in the
variables $T_3,\dots,T_{n+2}$.

Suppose first that $d\leq n-1$, so that we can apply
the results of Remark~\ref{rem:P1n} and deduce that
the Cox ring of $X$ is
$\mathbb K[T_1,\dots,T_{n+2},S_1,\dots,S_d]/I$,
where $I$ is generated by
$f_0+T_2S_1,\dots,f_i+T_1S_{i}+T_2S_{i+1},\dots,
f_d+T_1S_{d}$.
Since $f_0,\dots,f_d$ are general homogeneous linear forms,
we can use them in order to express the variables
$T_3,\dots,T_{d+3}$ as linear combinations of
$T_{d+4},\dots,T_{n+2}$ and $S_iT_1,S_iT_2$, for
$1\leq i\leq d$.
We conclude that $X$ is a toric variety
with grading matrix
\[
\begin{bmatrix*}[r]
  1 & 1 & 0 & \cdots & 0 & -1 & \cdots & -1 \\
  0 & 0 & \undermat{n-d-1}{1 & \cdots & 1} & \undermat{d}{1 & \cdots & 1}
\end{bmatrix*}.\\[10pt]
\]
After the unimodular change of basis sending $(1,0)$ to $(1,0)$ and $(-1,1)$ to $(0,1)$, this is the Cox grading of the scroll $S(0,\dots,0,1,\dots,1)$, where the number of $1$'s is $d$. This gives the statement in case $d\leq n-1$.

Let us suppose now that $d\geq n$. By Remark~\ref{main-rem}
we know that the Cox ring of $X$ contains homogeneous
elements $S_1,\dots,S_n$ which satisfy the relations
\[
f_0+T_2S_1,
\quad
f_1+T_1S_1+T_2S_2,
\quad
\dots
\quad
f_{n-1} + T_1S_{n-1} + T_2S_{n}.
\]
Substituting these relations in $f$ and dividing by $T_2^n$ gives
\begin{equation}
    \label{eq:sub}
T_1^{d-n+1}S_n + f_nT_1^{d-n} - f_{n+1}T_1^{d-n-1}T_2 +\dots
+(-1)^{d-n}f_dT_2^{d-n}.
\end{equation}
From the generality of the $f_i$,
via a suitable change of variables we can assume
that $f_0 = T_3,\dots, f_{n-1} = T_{n+2}$
and $f_i = a_{i,3}T_3+\dots +a_{i,n+2}T_{n+2}$,
for $n\leq i\leq d$. From the above relations we get
\[
T_3  =  -T_2S_1,
\quad
T_4 = -T_1S_1-T_2S_2,
\quad\dots,\quad
T_{n+2} =  -T_1S_{n-1}-T_2S_n.
\]
Therefore, for $n\leq i\leq d$, we can write
\[
\begin{array}{rcl}
f_i & = & -a_{i,3}T_2S_1 - a_{i,4}(T_1S_1+T_2S_2) -\dots -
a_{i,n+2}(T_1S_{n-1}+T_2S_n)\\[2mm]
& = & -(a_{i,4}S_1+\dots+a_{i,n+2}S_{n-1})T_1
-(a_{i,3}S_1+\dots +a_{i,n+2}S_n)T_2.
\end{array}
\]
Substituting these expressions in~\eqref{eq:sub} gives a polynomial
$g=g(T_1,T_2,S_1,\dots,S_n)$.
With respect to the Cox grading inherited from the previous presentation, one has
\[
\deg(T_1)=\deg(T_2)=(1,0),
\qquad
\deg(S_i)=(-1,1).
\]
Thus $g$ has degree $[d-n,1]$ in this grading. However, the same toric variety is isomorphic to $\mathbb P^1\times\mathbb P^{n-1}$ after the unimodular change of basis of the class group sending
\[
(1,0)\mapsto (1,0),
\qquad
(-1,1)\mapsto (0,1).
\]
In this new basis the variables $S_1,\dots,S_n$ have the natural degree $[0,1]$, and $g$ has degree $[d-n+1,1]$.
It is in this new grading that we continue the induction.
We can write
\[
g = g_0T_1^{d-n+1} - g_1T_1^{d-n}T_2 +
\dots + (-1)^{d-n+1}g_{d-n+1}T_2^{d-n+1},
\]
where each $g_i$ is homogeneous linear in the variables
$S_1,\dots,S_n$. The coefficients of the linear forms
$g_0,\dots,g_{d-n+1}$ in the variables $S_1,\dots,S_n$ form a
$(d-n+2)\times n$ matrix whose entries are linear functions of the parameters
$a_{i,k}$, with $n\leq i\leq d$ and $3\leq k\leq n+2$.
For a suitable choice of the parameters this matrix has rank
\[
\min\{d-n+2,n\}.
\]
Indeed, one can choose the $a_{i,k}$ so that the first
$\min\{d-n+2,n\}$ rows contain an identity block. Hence maximal rank holds on a non-empty Zariski open subset of the parameter space. Consequently the linear forms $g_0,\dots,g_{d-n+1}$ are general in the sense needed to apply the same construction again.

The above construction replaces a hypersurface of degree $[d,1]$ by an isomorphic hypersurface of degree $[d-(n-1),1]$ in another copy of $\mathbb P^1\times\mathbb P^{n-1}$. Since the new coefficients are again general, the same construction may be iterated. After finitely many steps one obtains a hypersurface of degree $[a,1]$, where $a\in\{0,\dots,n-2\}$ is congruent to $d$ modulo $n-1$. The first part of the proof then identifies the hypersurface with the scroll
\[
S(\undermat{n-a-1}{0,\dots,0},\undermat{a}{1,\dots,1}),
\]
as claimed.
\end{proof}

\section{Applications}
\label{sec:applications}

Our first application of Corollary~\ref{main-cor}
is to give a presentation for the
Cox ring of general Calabi-Yau anticanonical
hypersurfaces in smooth
toric Fano varieties of Picard rank two.

\begin{corollary}
\label{mt}
Let $\mathbb P$ be a smooth toric Fano variety
of dimension $n\geq 4$ and Picard rank two.
Let $X\subseteq\mathbb P$ be a general anticanonical
hypersurface whose homogeneous equation
is $f = 0$.
Then the Cox ring of $X$
and its grading matrix are given in the
following table.

{\footnotesize
\begin{longtable}{ll}
Cox ring & Grading matrix \\
\hline
\\
${\mathbb K}[T_1,\dots,T_{n+2}]/\left\langle f\right\rangle$
&
$
 \left[
 \begin{array}{ccccccc}
  1 & \cdots & 1 & 0 & -a_1 & \cdots & -a_k\\
  \undermat{n-k+1}{0 & \cdots & 0} & 1 & 1 & \cdots & 1
 \end{array}
 \right]
$\\
\\
\\
\multicolumn{2}{l}{
where $2\leq k\leq n-2$, $0\leq a_1\leq\cdots\leq a_{n-1}$,
and $\sum_ia_i<n-k+1$}
\\
\\
\hline
\\
$
\begin{array}{l}
{\mathbb K}[T_1,\dots,T_{n+2},S]/I
\\[1ex]
\text{with $I$ generated by}
\\[1ex]
f_1 + ST_1,\\
f_0 + ST_2
\end{array}
$
&
$
 \left[
 \begin{array}{cccccr|r}
   1  & 1 & 0 & \cdots &  0 & -1 & -1\\
  0  & 0 & \undermat{n-1}{1 & \cdots &  1} &  1 & n
 \end{array}
 \right]
$\\
\\
\multicolumn{2}{l}{
where $f = f_0T_1 - f_1T_2$}
\\
\\
\hline
\\
$
\begin{array}{l}
{\mathbb K}[T_1,\dots,T_{n+2},S_1,S_2]/I
\\[1ex]
\text{with $I$ generated by}
\\[1ex]
 f_0 + T_{n+2}S_2,\\
 f_1 + T_{n+1}S_2+T_{n+2}S_1,\\
 f_2 + T_{n+1}S_1
 \end{array}
$
&
$
 \left[
 \begin{array}{ccccc|cr}
  1 & \cdots & 1 & 0 & -a_1 & n+a_1 & n\\
  \undermat{n}{0 & \cdots & 0} & 1 & 1 & -1 & -1
 \end{array}
 \right]
$\\
\\
\multicolumn{2}{l}{
where $a_1\geq 0$ and
$f = f_0T_{n+1}^2 - f_1T_{n+1}T_{n+2} + f_2T_{n+2}^2$}
\end{longtable}
}
\end{corollary}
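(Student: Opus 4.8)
The plan is to combine the classification of Proposition~\ref{prop:fano} with Corollary~\ref{main-cor} and the degenerate form of Theorem~\ref{mteo}, organizing the argument by the value of $k$. First I would put $\mathbb{P}$ in the normal form of Proposition~\ref{prop:fano}, so that $\mathcal{R}(\mathbb{P})=\mathbb{K}[T_1,\dots,T_{n+2}]$ with the displayed grading, and compute the anticanonical class as the sum of the columns, $-K_{\mathbb{P}}=\sum_i\deg(T_i)=[(n-k+1)-\sum_i a_i,\ k+1]$. Writing the degree of $X$ as $[a,b]$, the Fano inequality $\sum_i a_i<n-k+1$ forces $a>0$ and $b=k+1>0$. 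Since $-K_{\mathbb{P}}$ is ample (hence very ample on the smooth toric $\mathbb{P}$), a general $X\in|-K_{\mathbb{P}}|$ is smooth by Bertini, and as $\dim X=n-1\geq 3$ the pullback $\imath^*\colon{\rm Cl}(\mathbb{P})\to{\rm Cl}(X)$ is an isomorphism by~\cite{rs}; thus the hypotheses of Proposition~\ref{pro:sheaf} hold throughout.

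The decisive observation is that the irrelevant ideal $\langle T_1,\dots,T_{n-k+1}\rangle\cap\langle T_{n-k+2},\dots,T_{n+2}\rangle$ has a codimension-two component precisely when $k=n-1$ (the locus $V(T_1,T_2)$) or $k=1$ (the locus $V(T_{n+1},T_{n+2})$), whereas for $2\leq k\leq n-2$ neither component is codimension two. In this last range I would show that $f$ vanishes on both $V(T_1,\dots,T_{n-k+1})$ and $V(T_{n-k+2},\dots,T_{n+2})$: on the first locus every surviving monomial has first degree $\leq 0<a$, and on the second every surviving monomial has second degree $0<b$, so no monomial of $f$ can carry the anticanonical degree. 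Hence both loci lie in $\tilde{X}=V(f)$, with codimensions $n-k\geq 2$ and $k\geq 2$ respectively; therefore $\tilde{X}\setminus\widehat{X}$ has codimension at least two, $\tilde{X}_1=\tilde{X}$, and, being a complete intersection (hence $S_2$) that is regular in codimension one via Remark~\ref{rem:normal}, $\tilde{X}$ is normal. Theorem~\ref{mteo} then degenerates (empty boundary ideal) to $\mathcal{R}(X)=\mathbb{K}[T_1,\dots,T_{n+2}]/\langle f\rangle$, the first row of the table.

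For the two remaining cases the variety carries a codimension-two component and I would simply invoke Corollary~\ref{main-cor}, reading off $d$ from the anticanonical degree. When $k=n-1$ we are in case (1) with $d=a=2-\sum_i a_i$, and the Fano bound leaves only $\sum_i a_i\in\{0,1\}$: the value $\sum_i a_i=1$ gives $d=1$ and, after checking that the single new generator $S=-f_0/T_2$ has degree $[-1,n]$, yields the middle row. When $k=1$ we are in case (2) with $d=b=2$, and computing $\deg S_1=[n+a_1,-1]$ and $\deg S_2=[n,-1]$ from $S_1=-f_2/T_{n+1}$, $S_2=-f_0/T_{n+2}$ reproduces the bottom row. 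The overlapping variety $\mathbb{P}^1\times\mathbb{P}^{n-1}$ (the case $k=n-1$, $\sum_i a_i=0$, equivalently $k=1$, $a_1=0$) is recorded once, via the $k=1$ presentation, where it appears as the bottom row with $a_1=0$.

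I expect the main obstacle to be the range $2\leq k\leq n-2$: one must check carefully that $f$ vanishes identically along both boundary components and that they have codimension at least two in $\tilde{X}$, so that $\tilde{X}_1=\tilde{X}$ and the normality hypothesis of Theorem~\ref{mteo} are secured. The rest is bookkeeping---tracking how the Fano inequality pins $d$ down to $1$ or $2$ and confirming that the degrees of the auxiliary generators $S_i$ match the displayed grading matrices---but this bookkeeping, together with the care needed to avoid double-counting $\mathbb{P}^1\times\mathbb{P}^{n-1}$, is where the argument must be watched most closely.
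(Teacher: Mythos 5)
Your proposal is correct and follows essentially the same route as the paper: classify $\mathbb{P}$ by $k$ via Proposition~\ref{prop:fano}, note that for $2\leq k\leq n-2$ the irrelevant locus has no codimension-two components so that $\tilde X_1=\tilde X$ and the Cox ring is the hypersurface $\mathbb{K}[T_1,\dots,T_{n+2}]/\langle f\rangle$, and for $k=n-1$ (Fano forcing $\sum_i a_i\in\{0,1\}$) and $k=1$ invoke Corollary~\ref{main-cor} with $d=1$ and $d=2$ respectively, routing the overlap $\mathbb{P}^1\times\mathbb{P}^{n-1}$ through the $k=1$ case exactly as the paper does. Your middle-range argument (vanishing of $f$ on both boundary strata, normality via $S_2$ plus regularity in codimension one) is simply a more detailed justification of the paper's terse statement that in this range the Cox ring is a hypersurface.
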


\begin{proof}
The grading matrix and irrelevant ideal of
$\mathbb P$ are given in Proposition~\ref{prop:fano},
and in what follows we use the same notation.

When $2\leq k\leq n-2$ the irrelevant locus of
$\mathbb P$ has no components of codimension $2$
so that the Cox ring is a hypersurface (first line
of the above table).

When $k = n-1$, the ambient toric variety is Fano
exactly when $a_1=\cdots=a_{n-2}=0$ and
$0\leq a_{n-1}\leq 1$. If $a_{n-1} = 1$,
then the anticanonical hypersurface $X$
has degree $[1,n]$ and, by Corollary~\ref{main-cor},
the Cox ring is the one given in Theorem~\ref{main-teo}
with $d = 1$.
The case $a_{n-1} = 0$ corresponds to
the ambient toric variety $\mathbb P^1
\times\mathbb P^{n-1}$, which is analyzed
in the next paragraph.

When $k=1$, the ambient toric variety is Fano
exactly when $a_1 \leq n-1$.
The anticanonical hypersurface $X$ has degree
$[n-a_1,2]$ and, by Corollary~\ref{main-cor},
the Cox ring is the one given in Theorem~\ref{main-teo}
with $d = 2$.

\end{proof}

We now consider the case of a surface
$X$ embedded into a smooth projective toric
threefold $\mathbb P$ of Picard rank $2$.
Since the Cox ring of $\mathbb P$ has
$5$ generators it follows that the irrelevant
locus is always the union of a codimension
$2$ with a codimension $3$ linear subspaces.
In what follows we compute the Cox ring
of $X$ in some cases, using the same notation
of Proposition~\ref{prop:fano}.
\begin{corollary}
\label{rk2-sur}
Let $\mathbb P$ be a smooth projective
toric threefold of Picard rank two.
Let $X\subseteq\mathbb P$ be a very
general ample surface whose
homogeneous equation is $f = 0$.
In the following table we compute
a presentation for the Cox ring of
certain such $X$.

{\footnotesize
\begin{longtable}{lll}
 & Cox ring & Grading matrix \\
\hline
\\
$
\begin{array}{l}
k = 2,\\
a_2 > 0,\\
\deg(f) = [1,b]\\
b\geq 3
\end{array}
$
&
$
\begin{array}{l}
{\mathbb K}[T_1,\dots,T_5,S]/I
\\[1ex]
\text{with $I$ generated by}
\\[1ex]
f_1 + ST_1,\\
f_0 + ST_2
\end{array}
$
&
$
 \left[
 \begin{array}{ccccc|r}
  1  & 1 & 0 & 0 & -a_2 & -1\\
  0  & 0 & 1 & 1 &  1 & b
 \end{array}
 \right]
$\\
\\
&\multicolumn{2}{l}{
where $f = f_0T_1 - f_1T_2$}\\
\\
\hline
\\
$
\begin{array}{l}
k = 1,\\
a_1 \geq 0,\\
\deg(f) = [a,2]\\
a\geq \max\{1,3-a_1\}
\end{array}
$
&
$
\begin{array}{l}
{\mathbb K}[T_1,\dots,T_5,S_1,S_2]/I
\\[1ex]
\text{with $I$ generated by}
\\[1ex]
 f_0 + T_5S_2,\\
 f_1 + T_4S_2+T_5S_1,\\
 f_2 + T_4S_1
 \end{array}
$
&
$
 \left[
 \begin{array}{ccccc|cc}
  1 & 1 & 1 & 0 & -a_1 & a+2a_1 & a+a_1\\
  0 & 0 & 0 & 1 & 1 & -1 & -1
 \end{array}
 \right]
$\\
\\
&\multicolumn{2}{l}{
where $f = f_0T_4^2 - f_1T_4T_5 + f_2T_5^2$}
\\
\end{longtable}
}
\end{corollary}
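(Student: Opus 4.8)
The plan is to run the proof of Theorem~\ref{main-teo} on the two rows of the table, which are the threefold analogues of cases (1) and (2) of Corollary~\ref{main-cor}; the essential point is that the hypothesis $\dim\mathbb P\geq 4$ now fails, so neither Theorem~\ref{main-teo} nor Corollary~\ref{main-cor} can be cited verbatim. In the first row we are in the situation of Proposition~\ref{prop:fano} with $n=3$, $k=2$ and $a_1=0$: the codimension-two component of the irrelevant ideal is $\langle T_1,T_2\rangle$, and $f=f_0T_1-f_1T_2$ has multiplicity $d=1$ along it, with $f_0,f_1$ of degree $[0,b]$. In the second row we take $k=1$: the codimension-two component is $\langle T_4,T_5\rangle$, $f=f_0T_4^2-f_1T_4T_5+f_2T_5^2$ has multiplicity $d=2$, and $f_0,f_1,f_2$ have degrees $[a,0]$, $[a+a_1,0]$, $[a+2a_1,0]$ in $T_1,T_2,T_3$. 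The degrees of the unprojection variables are then forced by $\deg S=\deg f_j-\deg T_\ell$ (for instance $S=-f_0/T_2$ has degree $[-1,b]$ in the first row), so they reproduce the listed grading matrices, and all the content lies in checking the hypotheses of Proposition~\ref{pro:sheaf} together with the codimension hypothesis of Theorem~\ref{main-teo}.

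Conditions (1)--(3) of Proposition~\ref{pro:sheaf} and the normality of $\tilde X_1$ transfer word for word from the proof of Theorem~\ref{main-teo}: (1) holds because $\mathbb P$ is toric, (2)--(3) because $p_Z^{-1}(Z_0)$ avoids all loci $V(T_i,T_j)$, and normality of $\tilde X_1$ follows from Bertini (smoothness of $X$, hence of $p_Z^{-1}(X_0)$) and Remark~\ref{rem:normal}. The one step that does \emph{not} carry over is condition (4), that $\imath^*\colon{\rm Cl}(\mathbb P)\to{\rm Cl}(X)$ be an isomorphism, and this is the main obstacle: since $X$ is now a \emph{surface}, the Grothendieck--Lefschetz statement~\cite{rs}*{Thm.~1} used for $\dim\mathbb P\geq4$ no longer covers a general member. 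This is precisely why the corollary assumes $X$ to be \emph{very general}: one must instead invoke a Noether--Lefschetz theorem for the divisor class group of a very general surface, whose positivity hypothesis I expect to amount to the nefness of $K_X=(K_{\mathbb P}+X)|_X$. Indeed, using $-K_{\mathbb P}=\sum_i\deg T_i$ one computes $K_{\mathbb P}+X$ to have degree $[a_2-1,\,b-3]$ in the first row and $[a+a_1-3,\,0]$ in the second; since the nef cone is the first quadrant, the bounds $b\geq 3$ and $a\geq\max\{1,3-a_1\}$ are exactly the assertions that $K_X$ is nef while $X$ remains ample.

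It remains to verify the codimension hypothesis of Theorem~\ref{main-teo} in each case, a short computation with general forms. In the first row, restricting to $T_1=T_2=0$ turns $f_0,f_1$ into general binary forms of degree $b$ in $T_3,T_4$ (no $T_5$ survives, as $a_2>0$); two general binary forms share no zero, so $V(T_1,T_2,f_0,f_1)$ is the $T_5$-axis, of codimension $4=d+3$. In the second row $f_0,f_1,f_2$ are already general forms in $T_1,T_2,T_3$, of positive degrees because $a\geq1$, and three general forms in three variables meet only at the origin, so $V(T_4,T_5,f_0,f_1,f_2)=\{0\}$, of codimension $5=d+3$. With the codimension hypothesis in hand, the inductive argument of Theorem~\ref{main-teo}---introducing the $S_i$, checking at each step that the ideal is saturated with respect to $T_2$ (resp.\ $T_5$), and applying Lemma~\ref{pro:crit} to the intersection $R_{T_1}\cap R_{T_2}$ (resp.\ $R_{T_4}\cap R_{T_5}$) to upgrade the inclusion $R'\subseteq R_{T_1}\cap R_{T_2}$ to an equality---goes through unchanged and yields the two presentations in the table.
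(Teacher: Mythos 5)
Your proposal is correct and follows essentially the same route as the paper: the paper likewise runs the proof of Theorem~\ref{main-teo} verbatim once the class-group isomorphism is established, derives the numerical bounds in the table from ampleness of $H$ together with nefness of $H+K_{\mathbb P}$, and checks the codimension hypothesis by the same general-forms computation. The Noether--Lefschetz statement you conjecture is exactly what the paper invokes, namely \cite[Thm. 1]{rs2}, whose hypotheses ($H$ ample and base point free, $H+K_{\mathbb P}$ base point free, $X\in|H|$ very general) reduce on a smooth toric variety to precisely the ample-plus-nef conditions you computed.
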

\begin{proof}
By Proposition~\ref{prop:fano},
the grading matrix of the Cox ring
of $\mathbb{P}$ is one of the following
(corresponding to the cases $k=2$ and $k=1$
respectively):
\[
 \left[
 \begin{array}{ccccc}
  1  & 1 & 0 & -a_1 & -a_2\\
  0  & 0 & 1 & 1 &  1
 \end{array}
 \right]
 \qquad
 \left[
 \begin{array}{ccccc}
  1  & 1 & 1 & 0 & -a_1\\
  0  & 0 & 0 & 1 & 1
 \end{array}
 \right],
\]
where $0\leq a_1\leq a_2$.
The nef cone of $\mathbb{P}$ is the positive quadrant in both cases thus $f$
has one of the following form:
\[
 f = \sum_{i=0}^d(-1)^if_iT_1^iT_2^{d-i},
 \qquad
 f = \sum_{i=0}^d(-1)^if_iT_4^iT_5^{d-i},
\]
respectively. Observe that since the spectrum
of the Cox ring of $\mathbb P$ has dimension $5$,
the codimension condition given in
Theorem~\ref{main-teo} implies $d\leq 2$.

We proceed as in Theorem~\ref{main-teo}, whose proof applies verbatim here once the isomorphism at the level of divisor class groups is established.
We use here the Lefschetz-type theorem~\cite[Thm. 1]{rs2}
whose hypotheses are: the class $H$ of $X$ must be ample and base point free, $H + K_{\mathbb{P}}$ must be base point free, and $X \in |H|$ must be very general. Since $\mathbb{P}$ is smooth and toric, the first two conditions are equivalent to requiring that $H$ is just ample
(i.e. the class of $H$ is $[a,b]$, with $a,b > 0$)
and $H + K_{\mathbb{P}}$ is nef.

Putting all this together, in case
$k=2$, since $d = a$ we obtain that
\[
 0<a\leq 2,
 \qquad
 b>0,
 \qquad
 a+a_1+a_2-2\geq 0,
 \qquad
 b-3\geq 0.
\]
When we evaluate $f_0,\dots,f_a$ in $T_1=T_2=0$,
we get $a+1$ general homogeneous polynomials
of degree $b$ in the variables $T_3$ (if $a_1>0$),
$T_3,T_4$ (if $a_1=0$ and $a_2>0$) or
$T_3,T_4,T_5$ (if $a_1=a_2=0$).
In the first case the codimension condition
cannot be satisfied. In the second case it
can be satisfied only if $a = d = 1$.
The third case has been included in
the analysis of $k=1$, after a change
of variables.

Analogously, when $k=1$ we have that $d = b$, and
we get the conditions:
\[
 a>0,
 \qquad
 0 < b \leq 2,
 \qquad
 a+a_1-3\geq 0,
 \qquad
 b-2\geq 0,
\]
which are the ones displayed in the statement.
We conclude observing that under these conditions
the locus $V(T_4,T_5,f_0,f_1,f_2)$ has
the required codimension $5$.
\end{proof}

\section{Examples of Picard rank 3}
\label{sec:rank3}

\begin{example}
\label{ex-1}
 Let $Z := {\rm Bl}_2\mathbb P^4$ be the
 blowing up of $\mathbb P^4$ in two points,
 and let us denote by $H$ the class of the
 pull back of a hyperplane and by
 $E_1,\, E_2$ the exceptional divisors.
 The grading matrix
 of the Cox ring $\mathcal R(Z) =
 \mathbb K[T_1,\dots,T_7]$ is the following
\[
\left[
 \begin{array}{rrrrrrr}
  1 & 1 & 1 & 1 & 1 & 0 & 0\\
  -1 & -1 & -1 & -1 & 0 & 1 & 0\\
  -1 & -1 & -1 & 0 & -1 & 0 & 1\\
  \end{array}
  \right].
\]
The codimension-two part of the irrelevant locus is defined by
\[
\langle T_4,T_7\rangle\cap\langle T_5,T_6\rangle\cap\langle T_6,T_7\rangle.
\]
Equivalently, for the localization computation, we use the monomial ideal
\[
\langle T_6T_7, T_4T_6, T_5T_7 \rangle.
\]
In the following picture
we display the
effective cone of $\mathbb P$, the nef cone (in dark grey), and the movable one (the union of the
nef cone and the light grey triangle).

\begin{center}
\begin{tikzpicture}[xscale=.4,
yscale=.45]
\tkzDefPoint(0,0){O}
\tkzDefPoint(-4,2){A}
\tkzDefPoint(4,2){B}
\tkzDefPoint(-2,-1){C}
\tkzDefPoint(2,-1){D}
\tkzDefPoint(0,-4){E}

\fill[gray!80] (O) -- (C) -- (D) -- cycle;
\fill[gray!30] (E) -- (C) -- (D) -- cycle;

\tkzDrawSegments(A,E A,B B,E B,C A,D C,D)
\tkzDrawPoints[size=2](A,B,C,D,E)
\tkzDrawPoints[size=2, color=gray](O)
\tkzLabelPoint[left](A){\scalebox{.5}{$E_1$}}
\tkzLabelPoint[right](B){\scalebox{.5}{$E_2$}}
\tkzLabelPoint[above](O){\scalebox{.5}{$H$}}
\tkzLabelPoint[left](C){\scalebox{.5}{$H-E_2$}}
\tkzLabelPoint[right](D){\scalebox{.5}{$H-E_1$}}
\tkzLabelPoint[below](E){\scalebox{.5}{$H-E_1-E_2$}}
\end{tikzpicture}
\end{center}

Let us consider a general hypersurface
$X\subseteq Z$ in an ample degree, for
instance $3H-E_1-E_2$. An equation $f$
of $X$ can be written as
{\small
\[
 T_4^2T_5T_6 + a_1T_4^2T_6^2 + T_4T_5^2T_7 + b_1T_4T_5T_6T_7 + b_2T_4T_6^2T_7 +
    c_1T_5^2T_7^2 + a_2T_5T_6T_7^2 + a_3T_6^2T_7^2
\]}
where $a_i,b_i,c_i\in\mathbb K[T_1,T_2,T_3]$
are general homogeneous of degree $i$.
Applying Algorithm~\ref{alg} to the
first two powers of $m_1 := T_6T_7$ we get
the following new elements of the Cox ring
\footnote{The Magma program to find the four elements is available at
\url{https://github.com/alaface/Cox-subvarieties/blob/main/Example\%206.1}.}
\[
\begin{array}{ll}
 S_1 :=  \dfrac{a_1T_6 + T_5}{T_7},
 &
 S_2 :=  \dfrac{c_1T_7 + T_4}{T_6},\\[2mm]
 S_3 := \dfrac{a_2 - b_1c_1 + c_1^2 + S_1S_2}
 {T_6},
 & S_4 := \dfrac{a_1^2 - a_1b_1 + b_2 + S_1S_2}{T_7}.
 \end{array}
\]
Their ideal of relations in
$\mathbb K[T_1,\dots,T_7,S_1,\dots,S_4]$
is the following complete intersection
{\small
\[
    \tilde J = \left\langle
    \begin{array}{cc}
    a_1 T_6 + T_5 - T_7 S_1, & c_1 T_7 + T_4 - T_6 S_2 \\
    -T_6 S_3 + a_2 - b_1 c_1 + c_1^2 + S_1 S_2,
    & -T_7 S_4 + a_1^2 - a_1 b_1 + b_2 + S_1 S_2 \\
    \multicolumn{2}{l}{-a_1 T_6 S_3 + T_6 S_2 S_4 + T_7 S_1 S_3 + a_1 c_1^2 - a_1 S_1 S_2 + a_3 + b_1 S_1 S_2 - b_2 c_1 - 2 c_1 S_1 S_2}
\end{array}
    \right\rangle.
\]}
A direct calculation shows that each component
$V(T_4,T_7)$, $V(T_5,T_6)$ and $V(T_6,T_7)$
has codimension $2$ in $V(\tilde J)$, so that
the open subset obtained by removing this codimension-two locus is big in $V(\tilde J)$.
It follows from Lemma~\ref{pro:crit} that
the new elements generate the Cox ring.
Eliminating
the variables $T_4$ and $T_5$, by means of the
first two equations, and putting
$J := \tilde J \cap \mathbb K[T_1,T_2,T_3,T_6,T_7,S_1,\dots,S_4]$
the Cox ring $\mathcal R(X)$
and the grading matrix are
\[
\mathbb K[T_1,T_2,T_3,T_6,T_7,S_1,\dots,S_4]
/J,
\quad
\left[
\begin{array}{rrrrrrrrr}
1 & 1 & 1 & 0 & 0 & 1 & 2 & 1 & 2\\
-1 & -1 & -1 & 1 & 0 & -2 & -3 & 0 & -2\\
-1 & -1 & -1 & 0 & 1 & 0 & -2 & -2 & -3
\end{array}
\right].
\]
In particular $\mathcal R(X)$ is a complete
intersection. In the following picture
we display the cones ${\rm Eff}(X),\,
{\rm Mov}(X)$ and ${\rm Nef}(X)$.

\begin{center}
\begin{tikzpicture}[xscale=.4,
yscale=.45]

\begin{scope}[scale = .7]

\tkzDefPoint(0,0){O}
\tkzDefPoint(-4,2){A}
\tkzDefPoint(4,2){B}
\tkzDefPoint(-2,-1){C}
\tkzDefPoint(2,-1){D}
\tkzDefPoint(0,-4){E}
\tkzDefPoint(8,-4){F}
\tkzDefPoint(-8,-4){G}
\tkzDefPoint(-4,-10){H}
\tkzDefPoint(4,-10){I}
\tkzDefPoint(0,-8){L}
\tkzDefPoint(-4,-2){M}
\tkzDefPoint(4,-2){N}
\tkzDefPoint(-4,-6){P}
\tkzDefPoint(4,-6){Q}

\tkzDefPoint(-4,-4){R}
\tkzDefPoint(4,-4){S}
\tkzDefPoint(-2,-7){T}
\tkzDefPoint(2,-7){U}

\fill[gray!30] (O) -- (M) -- (P) -- (L) -- (Q) -- (N) -- cycle;
\fill[gray!80] (E) -- (D) -- (O) -- (C) -- cycle;

\tkzDrawSegments(A,B A,G G,H H,I I,F F,B)
\tkzDrawSegments(O,M M,P P,L L,Q Q,N N,O C,E D,E        A,F A,H B,G B,I F,H
G,I A,I B,F B,H F,G)
\tkzDrawPoints[size=2, color=gray](O,L,M,N,P,Q,C,D,R,S,T,U)
\tkzDrawPoints[size=2, color=black](A,B,E,F,G,H,I)

\tkzLabelPoint[left](A){\scalebox{.5}{$E_1$}}
\tkzLabelPoint[right](B){\scalebox{.5}{$E_2$}}
\tkzLabelPoint[above](O){\scalebox{.5}{$H$}}
\tkzLabelPoint[below](E){\scalebox{.5}{$H-E_1-E_2$}}
\tkzLabelPoint[right](F){\scalebox{.5}{$H-2E_1$}}
\tkzLabelPoint[left](G){\scalebox{.5}{$H-2E_2$}}
\tkzLabelPoint[below](H){\scalebox{.5}{$2H-2E_1-3E_2$}}
\tkzLabelPoint[below](I){\scalebox{.5}{$2H-3E_1-2E_2$}}
\tkzLabelPoint[below](L){\scalebox{.5}{$5H-6E_1-6E_2$}}
\tkzLabelPoint[above left](M){\scalebox{.5}{$2H-3E_2$}}
\tkzLabelPoint[below left](P){\scalebox{.5}{$4H-3E_1-6E_2$}}
\tkzLabelPoint[above right](N){\scalebox{.5}{$2H-3E_1$}}
\tkzLabelPoint[below right](Q){\scalebox{.5}{$4H-6E_1-3E_2$}}
\end{scope}

\end{tikzpicture}
\end{center}

\end{example}

\begin{example}\label{ex-2}
If we now pass to the toric model obtained by the anti-flip of the strict
transform of the line through the two points
of $\mathbb P^4$, we obtain a new toric
$4$-fold $Z'$. The codimension-two part of the irrelevant locus is defined by
\[
\langle T_4,T_7\rangle\cap\langle T_5,T_6\rangle.
\]
Equivalently, for the localization computation, we use the monomial ideal
\[
\langle T_4T_5, T_4T_6, T_5T_7, T_6T_7 \rangle.
\]
In the following picture
we display the
effective cone of $\mathbb P$, the nef cone (in dark grey), and the movable one (the union of the
nef cone and the light grey triangle).

\begin{center}
\begin{tikzpicture}[xscale=.4,
yscale=.45]
\tkzDefPoint(0,0){O}
\tkzDefPoint(-4,2){A}
\tkzDefPoint(4,2){B}
\tkzDefPoint(-2,-1){C}
\tkzDefPoint(2,-1){D}
\tkzDefPoint(0,-4){E}

\fill[gray!30] (O) -- (C) -- (D) -- cycle;
\fill[gray!80] (E) -- (C) -- (D) -- cycle;

\tkzDrawSegments(A,E A,B B,E B,C A,D C,D)
\tkzDrawPoints[size=2](A,B,C,D,E)
\tkzDrawPoints[size=2, color=gray](O)
\tkzLabelPoint[left](A){\scalebox{.5}{$E_1$}}
\tkzLabelPoint[right](B){\scalebox{.5}{$E_2$}}
\tkzLabelPoint[above](O){\scalebox{.5}{$H$}}
\tkzLabelPoint[left](C){\scalebox{.5}{$H-E_2$}}
\tkzLabelPoint[right](D){\scalebox{.5}{$H-E_1$}}
\tkzLabelPoint[below](E){\scalebox{.5}{$H-E_1-E_2$}}
\end{tikzpicture}
\end{center}

Let us consider a hypersurface
$X'\subseteq Z'$ in the ample degree
$3H-2E_1-2E_2$. An equation $f$
of $X'$ can be written as
\[
 a_1T_4T_5 +a_2T_4T_6+b_2T_5T_7+a_3T_6T_7
\]
where $a_i,b_i\in\mathbb K[T_1,T_2,T_3]$
are general homogeneous of degree $i$.
Applying Algorithm~\ref{alg} to the monomial
$T_4T_5$, we get
the following new elements of the Cox ring
\footnote{The Magma program to find the two elements is available at \url{https://github.com/alaface/Cox-subvarieties/blob/main/Example\%206.2}.}
\[
\begin{array}{ll}
 S_1 :=  \dfrac{b_2T_5 + a_3T_6}{T_4},
 &
 S_2 :=  \dfrac{a_2T_4 + a_3T_7}{T_5}.
 \end{array}
\]
A direct calculation shows that each component
$V(T_4,T_7)$ and $V(T_5,T_6)$
has codimension $2$ in $V(\tilde J)$, so that
the open subset obtained by removing this codimension-two locus is big in $V(\tilde J)$.
It follows from Lemma~\ref{pro:crit} that
the new elements generate the Cox ring.
The Cox ring $\mathcal R(X')$ and
its grading matrix are
\[
\mathbb K[T_1,\dots,T_7,S_1,S_2]/J,
\quad
\left[
\begin{array}{rrrrrrrrr}
1 & 1 & 1 & 1 & 1 & 0 & 0 & 2 & 2\\
-1 &-1 &-1 &-1 & 0 & 1 & 0 &-1 &-3\\
 -1 &-1 &-1 & 0  &-1 & 0 & 1 &-3 &-1
\end{array}
\right]
\]
where
\[
    J = \left\langle
    \begin{array}{ll}
    a_1T_5 + a_2T_6 + T_7S_1, &
    T_4S_1 - b_2T_5 - a_3T_6, \\
    a_2T_4 - T_5S_2 + a_3T_7, &
    a_1T_4 + T_6S_2 + b_2T_7,\\
    a_1a_3 - a_2b_2 + S_1S_2,
    \end{array}
    \right\rangle.
\]
Therefore in this case $\mathcal R(X')$
is finitely generated, but not a complete intersection.
In the following picture we display the
cones ${\rm Eff}(X'),\,
{\rm Mov}(X')$ and ${\rm Nef}(X')$.

\begin{center}
\begin{tikzpicture}[xscale=.4,
yscale=.45]

\begin{scope}[xshift=15cm, scale=1]
\tkzDefPoint(0,0){O}
\tkzDefPoint(-4,2){A}
\tkzDefPoint(4,2){B}
\tkzDefPoint(-2,-1){C}
\tkzDefPoint(2,-1){D}
\tkzDefPoint(0,-4){E}
\tkzDefPoint(-4,-4){F}
\tkzDefPoint(4,-4){G}

\fill[gray!30] (O) -- (C) -- (D) -- cycle;
\fill[gray!80] (E) -- (C) -- (D) -- cycle;

\tkzDrawSegments(A,B C,D A,F F,G G,B O,C O,D D,E C,E)
\tkzDrawSegments(A,O O,B A,C B,D C,F D,G)
\tkzDrawPoints[size=2](A,B,C,D,E,F,G)
\tkzDrawPoints[size=2, color=gray](O)
\tkzLabelPoint[left](A){\scalebox{.5}{$E_1$}}
\tkzLabelPoint[right](B){\scalebox{.5}{$E_2$}}
\tkzLabelPoint[above](O){\scalebox{.5}{$H$}}
\tkzLabelPoint[left](C){\scalebox{.5}{$H-E_2$}}
\tkzLabelPoint[right](D){\scalebox{.5}{$H-E_1$}}
\tkzLabelPoint[below](E){\scalebox{.5}{$H-E_1-E_2$}}
\tkzLabelPoint[below](F){\scalebox{.5}{$2H-E_1-3E_2$}}
\tkzLabelPoint[below](G){\scalebox{.5}{$2H-3E_1-E_2$}}
\end{scope}
\end{tikzpicture}
\end{center}

\end{example}

\bibliographystyle{plain}
\bibliography{ref}

\end{document}